\documentclass[11pt,a4paper]{amsart}

\usepackage[T1]{fontenc}
\usepackage{enumitem}
\usepackage{lmodern}
\usepackage{microtype}
\usepackage[textwidth=160mm,textheight=238mm,centering]{geometry}
\usepackage{amssymb}
\usepackage{xcolor}
\usepackage[colorlinks=true,linkcolor=blue!45!black,citecolor=blue!45!black,urlcolor=blue!45!black]{hyperref}

\hypersetup{
  pdftitle={Continuity for random walks in hyperbolic percolation: drift, entropy, and dimension},
  pdfauthor={Kohki Sakamoto}
}

\numberwithin{equation}{section}

\newtheorem{theorem}{Theorem}[section]
\newtheorem{proposition}[theorem]{Proposition}
\newtheorem{lemma}[theorem]{Lemma}
\newtheorem{corollary}[theorem]{Corollary}

\theoremstyle{definition}
\newtheorem{definition}[theorem]{Definition}
\newtheorem{question}[theorem]{Question}

\theoremstyle{remark}
\newtheorem{remark}[theorem]{Remark}

\newcommand{\G}{\Gamma}
\newcommand{\Z}{\mathbb Z}
\newcommand{\Om}{\Omega}
\newcommand{\hbdG}{\partial_h\Gamma}
\newcommand{\Prob}{\operatorname{Prob}}
\newcommand{\1}{\mathbf 1}
\newcommand{\dd}{\mathop{}\!\mathrm d}
\newcommand{\eps}{\varepsilon}
\newcommand{\whmu}{\widehat\mu}
\newcommand{\bnu}{\boldsymbol\nu}
\newcommand{\bnd}{\operatorname{bnd}}

\title[Continuity in hyperbolic percolation]{Continuity for random walks in hyperbolic percolation: drift, entropy, and dimension}
\author{Kohki Sakamoto}
\address{Graduate School of Mathematical Sciences, The University of Tokyo, 3-8-1 Komaba, Meguro, Tokyo 153-8914, Japan}
\email{sakamoto-kohki571@g.ecc.u-tokyo.ac.jp}
\subjclass[2020]{60G50, 60K35, 20F67}
\keywords{Bernoulli percolation, hyperbolic group, random walk, harmonic measure}

\begin{document}

\begin{abstract}
For simple random walk on infinite clusters of Bernoulli bond percolation on Cayley graphs of hyperbolic groups, we prove that the drift and asymptotic entropy depend continuously on the percolation parameter throughout the supercritical phase. Together with the dimension formula, this implies continuity of the Hausdorff dimension of harmonic measure on the Gromov boundary.
\end{abstract}

\maketitle

\section{Introduction}\label{sec:introduction}
In their seminal paper \cite{BLS1999}, Benjamini, Lyons, and Schramm studied simple random walk on infinite clusters of Bernoulli bond percolation on Cayley graphs and asked which properties of the walk on the ambient graph are inherited by its infinite percolation clusters. For a nonamenable Cayley graph, simple random walk has positive drift and positive asymptotic entropy, and hence the graph admits non-constant bounded harmonic functions. They showed that simple random walk on every infinite percolation cluster also has positive drift and positive asymptotic entropy (defined below) \cite[Theorem~4.4, Lemma~4.6]{BLS1999}.

Their result leads to the problem of understanding how the drift and asymptotic entropy vary with the percolation parameter. Indeed, Benjamini, Lyons, and Schramm conjectured that the asymptotic entropy is monotone in $p$ \cite[Conjecture~4.10]{BLS1999} (see \cite{LyonsWhite2023} for related progress). Another natural question concerns the regularity of the drift and asymptotic entropy as functions of the percolation parameter. In this paper, we investigate this question when the underlying graph is Gromov hyperbolic.

To state our result precisely, let $\Gamma$ be a non-elementary hyperbolic group and let $G$ be a Cayley graph of $\Gamma$. Let $p_c$ denote the \emph{critical probability} for the existence of an infinite open cluster. For $p>p_c$, let $\mu_p^\infty$ be the law of Bernoulli bond percolation conditioned on the open cluster $C_p(\omega)$ of the identity being infinite. Given such an environment, let $P_p^\omega$ denote the quenched law of simple random walk on $C_p(\omega)$, started at the identity. Then there exist deterministic constants $\ell(p),h(p)>0$ such that, for $\mu_p^\infty$-almost every $\omega$ and $P_p^\omega$-almost every path $x=(x_n)_{n\ge0}$,
\[
  \frac{|x_n|}{n}\to \ell(p), \quad -\frac1n\log P_p^\omega(X_n=x_n)\to h(p).
\]
Here $|\cdot|$ denotes word length in the ambient Cayley graph. The constants $\ell(p)$ and $h(p)$ are called the \emph{drift} and the \emph{asymptotic entropy}, respectively.
\begin{theorem}\label{thm:main}
Let $\Gamma$ be a non-elementary hyperbolic group. For Bernoulli bond percolation on any Cayley graph of $\Gamma$, the functions
\[
  p\mapsto \ell(p), \quad p\mapsto h(p)
\]
are continuous on $(p_c,1]$.
\end{theorem}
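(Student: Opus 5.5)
We will prove continuity by exhibiting each of $\ell(p)$ and $h(p)$ both as an infimum of continuous functions (giving upper semicontinuity) and as a supremum, or at least a locally uniform limit, of continuous functions (giving lower semicontinuity). The natural framework is the environment seen from the particle. Fix $p>p_c$. We use the Benjamini--Lyons--Schramm stationary measure, namely $\mu_p^\infty$ reweighted by the degree of the identity in $C_p(\omega)$, which is stationary and ergodic for the environment process. Under this measure, $|x_n|$ is subadditive along the walk, and so is $-\log P_p^\omega(X_n=x_n)$ after averaging. Hence
\[
  \ell(p)=\inf_n \tfrac1n \mathbb E_p|X_n| \quad\text{and}\quad h(p)=\inf_n \tfrac1n H_p(X_n),
\]
where both expectations are annealed over the stationary measure. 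For fixed $n$, each of $\mathbb E_p|X_n|$ and $H_p(X_n)$ depends only on the states of edges in the ball of radius $n$, conditioned on $\{|C_p|=\infty\}$. The probability $\theta(p)$ of that event is continuous on $(p_c,1]$; continuity of $\theta$ in the supercritical phase holds for nonamenable Cayley graphs of hyperbolic groups, using $\theta(p_c)=0$ and uniqueness-type arguments, or we cite it. The event $\{|C|=\infty\}$ is itself not local, but it is a decreasing limit of the events $\{\,o\leftrightarrow \partial B_R\,\}$, and these converge uniformly on compact subintervals of $(p_c,1]$ by monotonicity and Dini's theorem. So each finite-$n$ quantity is continuous in $p$, and $\ell$ and $h$ are upper semicontinuous.

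For lower semicontinuity we use hyperbolicity. The goal is an approximation from below with a uniform rate, of the form
\[
  \ell(p)\ \ge\ \tfrac1n\mathbb E_p|X_n| - \eps(n)
\]
with $\eps(n)\to0$ uniformly in $p$ in a compact subinterval $[p_0,1]$, $p_0>p_c$. In a hyperbolic group, $|X_{n+m}|\ge |X_n|+|X_n^{-1}X_{n+m}|-2(X_0|X_{n+m})_{X_n}$, so it suffices to show that the Gromov products $(X_0|X_{n+m})_{X_n}$ have expectation bounded uniformly in $n$, $m$, and $p\in[p_0,1]$. For entropy we argue analogously through the boundary: $h(p)$ equals the Furstenberg-type entropy of the harmonic measure $\nu_p^\omega$ on $\partial\Gamma$. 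We will write $h=\lim_n \tfrac1n H(X_n)$ with an error controlled by $\tfrac1n\mathbb E[-\log \nu^\omega_{X_n}(\text{shadow})]$-type quantities, in the spirit of Gou\"ezel's and Erschler--Kaimanovich's continuity proofs for random walks on hyperbolic groups. Both the drift and the entropy estimates thereby reduce to one uniform input: exponential (or at least summable) decay of the probability that the walk returns to the $r$-neighbourhood of a geodesic ray point it has passed, uniformly over $p\in[p_0,1]$ and over environments.

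That uniform deviation estimate is the main obstacle, because clusters have arbitrarily long dangling ends and traps, so no uniform spectral gap holds. Our first attempt will combine three ingredients: (i) the uniform-in-$p$ anchored isoperimetry of infinite clusters on nonamenable graphs (Chen--Peres, Virág), which on $[p_0,1]$ gives uniform heat-kernel decay off a thin set of environments; (ii) the BLS stationarity, which makes the bad set of traps have uniformly controlled density; and (iii) a renewal, or regeneration-time, decomposition along the path. Together these should yield that $\sup_{p\in[p_0,1]}\mathbb P_p\big((X_0|X_n)_{X_k}>t\big)$ is summable in $t$. With that estimate in hand, the lower approximation holds uniformly. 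Since $\ell$ and $h$ are then locally uniform limits of continuous functions, they are continuous on $(p_c,1]$.
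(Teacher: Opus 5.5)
Your upper semicontinuity argument is correct, and for the entropy it is essentially the paper's Proposition~\ref{prop:entropy-usc}. Under the degree-biased law, $\mathbb E_p|X_n|$ and $H_n(p)$ are subadditive. For fixed $n$ each is continuous in $p$; your Dini argument with $\mathbb P_p(o\leftrightarrow\partial B_R)\downarrow\theta(p)$ is a valid substitute for the paper's almost-sure convergence of indicators under the standard coupling. Hence $\ell$ and $h$ are infima of continuous functions.

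The genuine gap is lower semicontinuity. Your reduction there is only algebra: $|X_{n+m}|=|X_n|+|X_n^{-1}X_{n+m}|-2(X_0|X_{n+m})_{X_n}$ is an identity and uses no hyperbolicity. All the content is pushed into the estimate $\sup_{n,m}\sup_{p\in[p_0,1]}\mathbb E_p(X_0|X_{n+m})_{X_n}<\infty$, which you do not prove; you only list ingredients that ``should'' give it. By stationarity this is a bound on $\mathbb E(X_{-n}|X_m)_o$ for two independent walks in the same cluster. In particular it would force a uniform bound on $\mathbb E(\xi^+|\xi^-)_o$ for the two boundary limits of the bilateral walk, which is a genuinely quantitative statement about harmonic measures.

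None of your ingredients delivers this. The estimate cannot hold uniformly over environments, since clusters contain dead ends and traps of every size. The anchored-expansion results you cite give random constants with no uniform-in-$p$ tail control, and heat-kernel decay only of order $e^{-cn^{1/3}}$. There is also no evident regeneration structure, because the environment ahead of a walk on a cluster of a hyperbolic group is not fresh. For the entropy you offer even less: shadow-type error terms in the style of Erschler--Kaimanovich need uniform control of walks conditioned on their limit point, which is exactly what is hard to obtain in random environments. As written, you have shown only $\limsup_n\ell(p_n)\le\ell(p)$ and $\limsup_n h(p_n)\le h(p)$.

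The missing idea is that no quantitative estimate is needed; the paper argues softly.

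\emph{Drift.} The paper proves a Furstenberg-type formula $\ell(p)=Z_p^{-1}\sum_{s}\int a_{p,s}(\omega)\beta(s,\xi)\,\dd\eta(\omega,\xi)$ valid for \emph{every} $p$-stationary family $\eta$ on the horoboundary (Theorem~\ref{thm:furstenberg}). Weak limits of $p_n$-stationary families are $p$-stationary (Lemma~\ref{lem:stationary-stability}), and $\beta(s,\cdot)$ is continuous. Compactness then gives continuity of $\ell$ in both directions at once.

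\emph{Entropy lower bound.} The paper proceeds in four steps.
\begin{enumerate}
\item It proves a Kaimanovich--Vershik formula (Theorem~\ref{thm:kv}), rewritten as $h(p)=Z_p^{-1}\sum_sD_{\mathrm{KL}}(M_{p,s}\,\|\,N_{p,s})$.
\item It shows the harmonic family is the unique stationary family on $\partial\Gamma$ (Theorem~\ref{thm:uniqueness-gromov}). This uses a bounded martingale, boundary contraction away from the backward limit of the bilateral walk, and non-atomicity of harmonic measure.
\item It deduces $\bnu_{p_n}\to\bnu_p$ weakly (Corollary~\ref{cor:harmonic-family-continuity}).
\item It concludes by lower semicontinuity of $D_{\mathrm{KL}}$ under weak convergence (Lemma~\ref{lem:relative-entropy-lsc}).
\end{enumerate}
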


Bernoulli percolation on Gromov hyperbolic graphs has been studied extensively. Important early examples include tessellations of $\mathbb H^2$ and Cayley graphs of cocompact Fuchsian groups, studied in particular by Lalley and by Benjamini--Schramm \cite{Lalley1998,BenjaminiSchramm2001,Lalley2001}. One characteristic feature of these models is the presence of two nondegenerate supercritical phases. Let $p_u$ denote the \emph{uniqueness threshold}, that is, the infimum of the parameters $p$ for which there is almost surely a unique infinite cluster. On $\mathbb Z^d$ one has $p_u=p_c$, while on a regular tree one has $p_u=1$. Thus one of the two possible supercritical phases degenerates in each of these classical examples. By contrast, every one-ended hyperbolic Cayley graph satisfies
\[
  p_c<p_u<1.
\]
Indeed, hyperbolic groups are finitely presented, so the result of Babson and Benjamini \cite{BabsonBenjamini1999} gives $p_u<1$, while Hutchcroft proved $p_c<p_u$ for every nonamenable quasi-transitive Gromov hyperbolic graph \cite{Hutchcroft2019}. Theorem~\ref{thm:main} gives continuity across this uniqueness threshold.

Further, in the hyperbolic setting, the Gromov boundary offers a geometric perspective on this phase transition via the limit sets of clusters. The \emph{limit set} $\Lambda(C)\subset\partial\Gamma$ of an infinite cluster $C$ consists of its accumulation points in the Gromov compactification. For site percolation on certain planar Cayley graphs of cocompact Fuchsian groups, Lalley showed that, in the coexistence phase, the Hausdorff dimension of the limit set is a continuous, strictly increasing function of $p$, tending to $0$ at $p_c$ and to $1$ at $p_u$ \cite{Lalley2001}. By contrast, Hutchcroft and Pan proved that, for percolation on the product of a regular tree and an infinite amenable Cayley graph, the Hausdorff dimension of the limit set in the boundary of the tree factor jumps at $p_u$ \cite{HutchcroftPan2024}. Although the product is not hyperbolic, this result suggests that, beyond the planar case, the dimension of the limit set may exhibit discontinuity. 

The harmonic measure of the cluster is another important object on the boundary. Rather than recording every direction reached by the infinite cluster, it records the typical direction of a random-walk trajectory. In the present setting, the walk converges almost surely to the Gromov boundary $\partial\Gamma$, and we denote the distribution of its limit point by $\nu_\omega^p$. The dimension formula for harmonic measure \cite[Theorem~1.1]{Sakamoto2024} gives, for every visual metric $d_\eps$ with parameter $\eps$,
\begin{equation}\label{eq:dimension-formula-intro}
  \dim_H^{d_\eps}(\nu_\omega^p) =\frac{h(p)}{\eps\ell(p)}
\end{equation}
for $\mu_p^\infty$-almost every $\omega$. We therefore obtain the following consequence.

\begin{corollary}\label{cor:dimension-intro}
For $p>p_c$, let $D_\varepsilon(p)$ denote the almost sure value of the Hausdorff dimension of $\nu_\omega^p$ with respect to a visual metric $d_\varepsilon$. Then the function $p\mapsto D_\varepsilon(p)$ is continuous on $(p_c,1]$.
\end{corollary}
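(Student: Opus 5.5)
The corollary follows from Theorem~\ref{thm:main} combined with the dimension formula \eqref{eq:dimension-formula-intro}. Fix a visual metric $d_\eps$. For each $p\in(p_c,1]$, \cite[Theorem~1.1]{Sakamoto2024} gives
\[
\dim_H^{d_\eps}(\nu_\omega^p)=\frac{h(p)}{\eps\ell(p)}
\]
for $\mu_p^\infty$-almost every $\omega$. In particular the dimension is almost surely constant, so $D_\eps(p)$ is well defined and
\[
D_\eps(p)=\frac{h(p)}{\eps\ell(p)} .
\]

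Continuity then reduces to the continuity of a quotient. By Theorem~\ref{thm:main}, the functions $p\mapsto h(p)$ and $p\mapsto\ell(p)$ are continuous on $(p_c,1]$. By the result of Benjamini--Lyons--Schramm recalled in the introduction, $\ell(p)>0$ for every $p>p_c$. Hence $p\mapsto h(p)/(\eps\ell(p))$ is a quotient of continuous functions whose denominator never vanishes on $(p_c,1]$, and it is therefore continuous there. At the endpoint $p=1$ the cluster is the whole Cayley graph, and the same formula applies: either \cite{Sakamoto2024} covers $p=1$ directly, or one uses the classical dimension formula for simple random walk on a hyperbolic group.

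There is no real obstacle at this stage; all the work lies in Theorem~\ref{thm:main}. The only points to check are the following.
\begin{itemize}
\item The dimension formula must hold at every $p\in(p_c,1]$, including $p=1$, and not only for almost every $p$.
\item The constant $\eps$ must be fixed independently of $p$. This holds because the visual metric depends only on $\Gamma$ and the Cayley graph, not on the percolation parameter.
\end{itemize}
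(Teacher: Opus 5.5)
Your proposal is correct and matches the paper's proof. The paper likewise combines the dimension formula $D_\eps(p)=h(p)/(\eps\ell(p))$ from \cite[Theorem~1.1]{Sakamoto2024} with the continuity of $h$ and $\ell$ from Theorem~\ref{thm:main} and with $\ell(p)>0$ on $(p_c,1]$. Your checks that $\eps$ is fixed independently of $p$ and that the formula holds at $p=1$ are harmless points the paper leaves implicit.
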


In particular, the Hausdorff dimension of the harmonic measure is continuous even at the uniqueness threshold $p_u$.

For comparison, a similar distinction between a geometric limit set and a limiting boundary measure is already known for branching Brownian motion on $\mathbb H^2$ with branching rate $\beta$. Lalley and Sellke showed that the set of all boundary accumulation points undergoes a phase transition at the recurrence-transience threshold $\beta=1/8$ \cite{LalleySellke1997,Lalley2006}. Here, this threshold plays a role analogous to the uniqueness threshold $p_u$ above. Geldbach instead studied the normalized empirical distribution of the particles and proved that its limiting boundary measure has Hausdorff dimension $\min\{2\beta,1\}$, which is continuous in $\beta$ \cite{Geldbach2025}.

\subsection{Related work}\label{subsect:relatedworks}
A parallel problem concerns perturbations of the increment law while keeping the underlying graph fixed. Erschler raised questions about the continuity of range, entropy, and drift in this setting, and Erschler and Kaimanovich proved continuity of entropy and drift on hyperbolic groups under some moment assumptions \cite{Erschler2011,ErschlerKaimanovich2013}. For measures with fixed finite support, Ledrappier obtained Lipschitz regularity, Mathieu established differentiability formulas for symmetric measures, and Gou\"ezel proved real analyticity of entropy and drift \cite{Ledrappier2013,Mathieu2015,Gouezel2017}. Related continuity and differentiability results for groups with contracting behavior were obtained by Mathieu and Sisto \cite{MathieuSisto2020} and Choi \cite{Choi2026}. Outside hyperbolic-like settings, Silva proved continuity of asymptotic entropy for random walks on a broad class of wreath products \cite{Silva2026}. See also the references therein for further historical background.

Gu and Zhao \cite{GuZhao2025} recently proved that the diffusivity and conductivity of supercritical Bernoulli percolation on $\mathbb Z^d$ are $C^\infty$ functions of $p$. Although their setting and methods are quite different from ours, their result was one of the motivations for the present question and suggests that stronger regularity in the percolation parameter may also hold in the hyperbolic setting.

Another related perspective comes from random Schreier graphs. Bowen \cite{Bowen2014} identified the Furstenberg entropy of stationary spaces arising from invariant random subgroups with the averaged asymptotic entropy of random walks on the associated Schreier graphs, and proved continuity of this quantity for certain families of tree-like Schreier graphs with controlled return-time probabilities.

\subsection{Questions}

\begin{question}\label{q:regularity}
Are $\ell(p)$ and $h(p)$ differentiable, smooth, or analytic in the supercritical phase?
\end{question}

Discontinuity phenomena are known when one varies the driving measure of a random walk (see, for example, \cite{Erschler2011}). This raises the analogous question for the percolation parameter.

\begin{question}
Does there exist a finitely generated group $\Gamma$ and a Cayley graph of $\Gamma$ for which $p\mapsto \ell(p)$ or $p\mapsto h(p)$ is discontinuous at some supercritical parameter?
\end{question}

\subsection{Outline of the proof}

Our approach is based on boundary theory. Boundary methods have played a central role in the parallel problem of how drift and entropy vary with the driving measure, from the continuity results of Erschler and Kaimanovich \cite{ErschlerKaimanovich2013} to the stronger regularity results of Ledrappier \cite{Ledrappier2013} and Gou\"ezel \cite{Gouezel2017}.

For the drift, our argument is inspired by that of Erschler and Kaimanovich \cite{ErschlerKaimanovich2013}. The starting point is a Furstenberg-type formula expressing the drift in terms of the Busemann cocycle and a measure on the horoboundary which is stationary with respect to the walk \cite{Furstenberg1963,Furstenberg1971}. Since the walk depends on the environment in our problem, we introduce the notion of \emph{stationary families}, measurable families of boundary measures satisfying a suitable relation defined in terms of rerooting. We establish the formula for every stationary family on the horoboundary (Theorem~\ref{thm:furstenberg}). For $p_n\to p>p_c$, compactness gives subsequential limits of $p_n$-stationary families, which we show are $p$-stationary. Passing to the limit in the formula then gives continuity of $\ell(p)$ (Theorem~\ref{thm:drift-continuity}).

For the entropy, the argument of Erschler and Kaimanovich requires uniform control of walks conditioned on their limit points. Since such control seems more difficult to obtain in random environments, we instead follow a softer approach recently developed by Silva \cite[Section~8]{Silva2026}, based on the Kaimanovich--Vershik formula and lower semicontinuity of Kullback--Leibler divergence. We establish the formula in our setting (Theorem~\ref{thm:kv}), expressing $h(p)$ as an average of these divergences between harmonic measures on the Gromov boundary and their rerooted versions. Unlike the drift formula, it involves the hitting measures themselves, so subsequential limits must be identified with the harmonic family at the limiting parameter. We do this by proving uniqueness of the stationary family on the Gromov boundary (Theorem~\ref{thm:uniqueness-gromov}).

For random walks on hyperbolic groups with a fixed driving measure, uniqueness follows from contraction of the boundary action and non-atomicity of the stationary measure. In our setting, rerooting changes the boundary measure, so this argument does not apply directly. We use a bilateral walk and apply boundary contraction outside neighborhoods of its backward limit viewed from the current position. Stationarity and non-atomicity of harmonic measure provide the required mass estimates, which, together with a martingale argument, yield uniqueness. Compactness then gives weak continuity of the corresponding joint measures (Corollary~\ref{cor:harmonic-family-continuity}).

Combining this continuity with lower semicontinuity of Kullback--Leibler divergence gives lower semicontinuity of $h(p)$ (Proposition~\ref{prop:entropy-lsc}). Upper semicontinuity follows from continuity and subadditivity of the averaged finite-time entropies (Proposition~\ref{prop:entropy-usc}).

\subsection*{Acknowledgments}
The author would like to thank Tom Hutchcroft and Naotaka Kajino for comments that motivated the question studied in this paper, and J\'er\'emie Brieussel and Ryokichi Tanaka for their helpful comments. This work was supported by JSPS KAKENHI Grant Number 25KJ0862 and by the FoPM WINGS Program at the University of Tokyo.

\newpage
\section{Preliminaries}\label{sec:preliminaries}

\subsection{Hyperbolic groups and boundaries}
Let $S$ be a finite symmetric generating set of a finitely generated group $\G$, and let $d=d_S$ be the word metric. We write $|g|=d(o,g)$ for $g\in\G$, where $o$ is the identity element. We identify the vertex set of the Cayley graph $G=(V,E)$ with $\G$. The Gromov product is defined by
\[
  (y|z)_x=\frac12\bigl(d(x,y)+d(x,z)-d(y,z)\bigr)
\]
for $x,y,z\in\G$. Throughout the paper, $\G$ is assumed to be non-elementary and \emph{word-hyperbolic}, i.e., it is not virtually cyclic and there exists $\delta\ge0$ such that
\[
  (x|z)_w\ge\min\{(x|y)_w,(y|z)_w\}-\delta
\]
for all $x,y,z,w\in\G$.

\begin{definition}[Gromov boundary]\label{def:gromov-boundary}
A geodesic ray based at $o$ is an isometric embedding $\gamma:\Z_{\ge0}\to G$ with $\gamma(0)=o$. Two such rays are called equivalent if their images have finite Hausdorff distance. The \emph{Gromov boundary} $\partial\Gamma$ is the set of equivalence classes of geodesic rays based at $o$.
\end{definition}

We write $\overline\G=\G\cup\partial\Gamma$ for the Gromov compactification. It is compact and metrizable, and the left action of $\G$ extends continuously to $\overline\G$. For $u,v\in\overline\G$, we extend the Gromov product by
\[
  (u|v)_o=\sup\bigl\{\liminf_{n\to\infty}(x_n|y_n)_o:x_n,y_n\in\G,\ x_n\to u,\ y_n\to v\bigr\},
\]
where the corresponding sequence is taken to be constant when $u$ or $v$ lies in $\G$. The hyperbolicity inequality at basepoint $o$ extends to $\overline\G$, with an additive error depending only on $\delta$. For $z_n\in\overline\G$ and $\zeta\in\partial\Gamma$, we have $z_n\to\zeta$ if and only if $(z_n|\zeta)_o\to\infty$. We refer to \cite{Gromov1987} for these facts.

For every sufficiently small $\eps>0$, there exists a \emph{visual metric} $d_\eps$ on $\partial\Gamma$ such that
\[
  d_\eps(\zeta,\zeta') \asymp e^{-\eps(\zeta|\zeta')_o}.
\]
See \cite[Section~1]{Coornaert1993} for details.

We use the following standard contraction property of the boundary action in the proof of uniqueness (see also \cite[Section~7]{Bowditch1998}).

\begin{proposition}\label{prop:boundary-contraction}
Let $(g_n)_{n\ge1}$ be a sequence in $\G$ such that
\[
  g_no\to a\in\partial\Gamma, \,\, g_n^{-1}o\to b\in\partial\Gamma.
\]
Then, for every neighborhood $U$ of $a$ and every neighborhood $V$ of $b$,
\[
  g_n(\partial\Gamma\setminus V)\subset U
\]
for all sufficiently large $n$.
\end{proposition}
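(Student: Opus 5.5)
The plan is to reduce the statement to a single Gromov-product estimate. Neighborhoods of $a\in\partial\Gamma$ can be taken of the form $U_R(a)=\{\zeta\in\partial\Gamma:(\zeta|a)_o>R\}$, and similarly $V_R(b)$. It therefore suffices to show the following: for every $R>0$ there are $R'$ and $N$ such that for $n\ge N$ and every $\zeta\in\partial\Gamma$ with $(\zeta|b)_o\le R'$ we have $(g_n\zeta|a)_o>R$. Throughout, all inequalities hold up to additive constants depending only on $\delta$, using the extension of the hyperbolicity inequality to $\overline\G$ recalled above.

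The key step is a lower bound for $(g_n\zeta|g_no)_o$. Since $g_n$ acts by isometries, and by the continuous extension of the action to $\overline\G$,
\[
  (g_n\zeta|g_no)_o=(\zeta|o)_{g_n^{-1}o}.
\]
By the definition of the Gromov product,
\[
  (\zeta|o)_{g_n^{-1}o}+(\zeta|g_n^{-1}o)_o = d(o,g_n^{-1}o)=|g_n|
\]
for $\zeta\in\G$. For $\zeta\in\partial\Gamma$ this identity holds up to an additive constant $C(\delta)$, by passing to sequences $z_k\to\zeta$ and using the fact that liminfs of Gromov products along different sequences converging to the same boundary point differ by at most $2\delta$. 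Hence
\[
  (g_n\zeta|g_no)_o\ \ge\ |g_n|-(\zeta|g_n^{-1}o)_o-C.
\]

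Next I bound $(\zeta|g_n^{-1}o)_o$ uniformly for $\zeta\notin V$. By hyperbolicity,
\[
  (\zeta|b)_o\ge\min\{(\zeta|g_n^{-1}o)_o,(g_n^{-1}o|b)_o\}-\delta .
\]
Since $g_n^{-1}o\to b$, we have $(g_n^{-1}o|b)_o\to\infty$. So for $n$ large, $(g_n^{-1}o|b)_o>R'+2\delta$, and then $(\zeta|b)_o\le R'$ forces $(\zeta|g_n^{-1}o)_o\le R'+\delta$. Also $|g_n|\to\infty$, because $g_no$ converges to a boundary point. Therefore $(g_n\zeta|g_no)_o\ge|g_n|-R'-C'\to\infty$, uniformly in $\zeta\notin V$.

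To finish, apply hyperbolicity once more:
\[
  (g_n\zeta|a)_o\ge\min\{(g_n\zeta|g_no)_o,(g_no|a)_o\}-\delta .
\]
Both terms tend to infinity uniformly, the second because $g_no\to a$. So $(g_n\zeta|a)_o>R$ for all large $n$, which gives $g_n(\partial\Gamma\setminus V)\subset U$.

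The main obstacle is the bookkeeping of additive constants when Gromov products involve boundary points. This concerns the identity $(g\zeta|go)_o=(\zeta|o)_{g^{-1}o}$ and the "sum equals distance" relation with a boundary point. I would handle both by working with approximating sequences $z_k\to\zeta$ in $\G$, applying the exact identities for $z_k$, and taking liminf, using the standard $2\delta$ comparison between different approximating sequences.
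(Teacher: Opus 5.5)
Your proof is correct and follows essentially the same route as the paper. Both arguments use the identity $(g_n\zeta|g_no)_o=|g_n|-(\zeta|g_n^{-1}o)_o$, valid up to a $\delta$-dependent constant, then a uniform bound on $(\zeta|g_n^{-1}o)_o$ away from $b$, then one final application of hyperbolicity with $(g_no|a)_o\to\infty$. You obtain the uniform bound explicitly through the sets $\{(\zeta|b)_o\le R'\}$, whereas the paper argues by contradiction on a compact $K\subset\partial\Gamma\setminus\{b\}$.

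One small wording fix: in your reduction, $R'$ is dictated by $V$, so the claim should read ``for every $R,R'$ there is $N$'' rather than ``there are $R'$ and $N$''. Your argument in fact proves the ``for every'' version.
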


\begin{proof}
Let $K$ be a compact subset of $\partial\Gamma\setminus\{b\}$. Since $g_n^{-1}o\to b$, there is $C<\infty$ such that $(\zeta|g_n^{-1}o)_o\le C$ for every $\zeta\in K$ and all sufficiently large $n$. Otherwise, the hyperbolicity inequality would force a sequence of points in $K$ to converge to $b$. For $x\in\G$, we have
\[
  (g_nx|g_no)_o=|g_n|-(x|g_n^{-1}o)_o.
\]
Passing to the boundary gives this identity up to an additive error depending only on $\delta$. The hyperbolicity inequality therefore yields
\[
  (g_n\zeta|a)_o\ge\min\{|g_n|-C,(g_no|a)_o\}-C_\delta\to\infty
\]
uniformly for $\zeta\in K$, where $C_\delta$ depends only on $\delta$. Thus $g_n\zeta\to a$ uniformly on $K$, proving the claim.
\end{proof}

For the drift formula, we use the horoboundary and the Busemann cocycle on it.

\begin{definition}[Horoboundary]\label{def:horoboundary}
For $y\in\G$, we define the normalized distance function
\[
  h_y(x)=d(x,y)-d(o,y),\quad x\in\G.
\]
The closure of $\{h_y:y\in\G\}$ in $\mathbb R^\G$, with the topology of pointwise convergence, is called the \emph{horofunction compactification}. The \emph{horoboundary} $\hbdG$ is the complement of $\{h_y:y\in\G\}$ in this compactification.
\end{definition}

The horoboundary is compact and metrizable. Every $\xi\in\hbdG$ is a $1$-Lipschitz function with $\xi(o)=0$. The action of $\G$ on $\hbdG$ is given by
\[
  (g\xi)(x)=\xi(g^{-1}x)-\xi(g^{-1}).
\]
If $h_{y_n}\to\xi\in\hbdG$, then $y_n$ converges in the Gromov compactification to a point depending only on $\xi$. This defines a continuous equivariant surjection $\pi_h:\hbdG\to\partial\Gamma$ \cite[Section~4]{WebsterWinchester2005}.

\begin{definition}[Busemann cocycle]\label{def:busemann-cocycle}
The \emph{Busemann cocycle} $\beta\colon \Gamma\times\hbdG\to\mathbb R$ is defined by $\beta(g,\xi)=\xi(g)$. It satisfies the cocycle identity
\[
  \beta(gh,\xi) = \beta(g,\xi)+\beta(h,g^{-1}\xi).
\]
\end{definition}

\begin{lemma}\label{lem:horofunction-asymptotics}
Let $x_n\to\zeta\in\partial\Gamma$, and let $\xi\in\hbdG$ satisfy $\pi_h(\xi)\ne\zeta$. Then
\[
  \beta(x_n,\xi)=|x_n|+O(1),
\]
where the bounded error may depend on the sequence $(x_n)_{n\ge1}$ and on $\xi$.
\end{lemma}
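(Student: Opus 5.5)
The plan is to approximate $\xi$ by genuine distance functions $h_{y_m}$ and estimate $h_{y_m}(x_n)$ through Gromov products. Write $\eta=\pi_h(\xi)\ne\zeta$. Choose $y_m\in\G$ with $h_{y_m}\to\xi$ pointwise. By the definition of $\pi_h$, $y_m\to\eta$ in $\overline\G$. For fixed $n$, pointwise convergence gives
\[
  \beta(x_n,\xi)=\xi(x_n)=\lim_{m\to\infty}\bigl(d(x_n,y_m)-d(o,y_m)\bigr).
\]
The right-hand side can be rewritten as
\[
  d(x_n,y_m)-d(o,y_m)=|x_n|-2(x_n|y_m)_o .
\]
Hence
\[
  \beta(x_n,\xi)=|x_n|-2\lim_{m\to\infty}(x_n|y_m)_o,
\]
where the limit exists because the left-hand side converges. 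So it suffices to show that $(x_n|y_m)_o$ is bounded uniformly in $n$ and in all sufficiently large $m$.

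Since $\zeta\ne\eta$, the extended Gromov product $(\zeta|\eta)_o$ is finite, say equal to $A$. Because $x_n\to\zeta$ and $y_m\to\eta$, we have $(x_n|\zeta)_o\to\infty$ and $(y_m|\eta)_o\to\infty$. Take $N$ and $M$ such that $(x_n|\zeta)_o> A+10\delta'$ for $n\ge N$ and $(y_m|\eta)_o>A+10\delta'$ for $m\ge M$. Here $\delta'$ is the additive constant in the extended hyperbolicity inequality on $\overline\G$. Apply the four-point inequality twice:
\[
  (\zeta|\eta)_o\ge\min\{(\zeta|x_n)_o,(x_n|y_m)_o,(y_m|\eta)_o\}-2\delta'.
\]
If $(x_n|y_m)_o$ were larger than $A+2\delta'$, the minimum would exceed $A+2\delta'$, which is a contradiction. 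Hence $(x_n|y_m)_o\le A+2\delta'$ for all $n\ge N$ and $m\ge M$. For the finitely many indices $n<N$, we use the trivial bound $0\le (x_n|y_m)_o\le |x_n|$, which gives a bound $C_n$ independent of $m$. Combining the two cases, $0\le\lim_m (x_n|y_m)_o\le C$ for all $n$, and therefore $\beta(x_n,\xi)=|x_n|+O(1)$. The constant $C$ depends on $(x_n)$ and $\xi$, as the statement allows.

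The only delicate step is the hyperbolicity inequality mixing boundary points and group elements. It requires the extended Gromov product and the fact that convergence to a boundary point means the Gromov product tends to infinity. Both facts were recorded in the preliminaries with an additive error depending only on $\delta$, so I expect no real obstacle there. One point to check is that $\pi_h(\xi)$ is the limit of every approximating sequence $y_m$. The paper states this, saying that the limit depends only on $\xi$.
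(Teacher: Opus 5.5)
Your proof is correct and follows the same route as the paper: approximate $\xi$ by $h_{y_m}$ with $y_m\to\pi_h(\xi)$, rewrite $h_{y_m}(x_n)=|x_n|-2(x_n|y_m)_o$, bound $(x_n|y_m)_o$ uniformly for large $n,m$ using $\pi_h(\xi)\ne\zeta$, and let $m\to\infty$. You additionally make explicit two points the paper leaves implicit: the double application of the extended hyperbolicity inequality behind the uniform bound, and the trivial treatment of the finitely many small indices $n$.
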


\begin{proof}
We choose $y_k\in\G$ such that $h_{y_k}\to\xi$ pointwise and $y_k\to\pi_h(\xi)$ in the Gromov compactification. Since the two boundary limits are distinct, there is $C<\infty$ such that
\[
  (x_n|y_k)_o\le C
\]
for all sufficiently large $n$ and $k$. Hence
\[
  h_{y_k}(x_n)=d(x_n,y_k)-|y_k|=|x_n|-2(x_n|y_k)_o=|x_n|+O(1)
\]
for all sufficiently large $n$ and $k$. Letting $k\to\infty$ proves the claim.
\end{proof}

\subsection{Bernoulli percolation}
We use the \emph{standard coupling} of Bernoulli percolation to compare different parameters. Let $\Om=[0,1]^E$ be equipped with the product topology and product Lebesgue measure $\mu$. The space $\Om$ is compact and metrizable. For $e\in E$, we write $U_e(\omega)=\omega(e)$. The group acts by left translation, i.e.,
\[
  (g\omega)(e)=\omega(g^{-1}e).
\]
At parameter $p$, the edge $e$ is open when $U_e\le p$. We denote by $C_p(\omega)$ the open cluster of $o$ and set
\[
  \Om_p^\infty=\{\omega:C_p(\omega)\text{ is infinite}\}, \,\, \theta(p)=\mu(\Om_p^\infty).
\]
The critical parameter $p_c$ is defined by $p_c=\inf\{p\in[0,1]:\theta(p)>0\}$. Throughout this paper, we only consider $p>p_c$ and write $\mu_p^\infty=\mu(\,\cdot\mid\Om_p^\infty)$.

For $s\in S$, we set $e_s=\{o,s\}$ and define
\[
  \deg_p(\omega)=\sum_{s\in S}\1_{\{U_{e_s}\le p\}}(\omega), \,\, a_{p,s}(\omega)=\1_{\Om_p^\infty}(\omega)\1_{\{U_{e_s}\le p\}}(\omega).
\]
We define the normalizing constant $Z_p$ by
\[
  Z_p=\int_\Om \deg_p(\omega)\1_{\Om_p^\infty}(\omega)\dd\mu(\omega),
\]
so that $0<\theta(p)\le Z_p\le |S|\theta(p)$. We define the degree-biased environment law by
\begin{equation}\label{eq:muhat}
  \dd\whmu_p(\omega) =\frac{\deg_p(\omega)\1_{\Om_p^\infty}(\omega)}{Z_p}\dd\mu(\omega).
\end{equation}
Note that the measures $\whmu_p$ and $\mu_p^\infty$ are equivalent.

\begin{lemma}\label{lem:environment-continuity}
Let $p_n\to p>p_c$. Then
\[
  \1_{\Om_{p_n}^\infty}\to\1_{\Om_p^\infty}, \quad \deg_{p_n}\1_{\Om_{p_n}^\infty} \to \deg_p\1_{\Om_p^\infty},
\]
and, for every $s\in S$,
\[
  a_{p_n,s}\to a_{p,s}
\]
$\mu$-almost surely and in $L^1(\mu)$. Consequently, $Z_{p_n}\to Z_p$ and $\whmu_{p_n}\to\whmu_p$ in total variation.
\end{lemma}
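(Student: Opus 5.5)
The plan is to prove $\mu$-almost sure convergence of the indicators first. Every function involved is bounded by $|S|$, so $L^1(\mu)$ convergence then follows by dominated convergence. The statements about $Z_p$ and $\whmu_p$ are immediate consequences.

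\emph{Step 1: the degree and edge indicators.} For a fixed edge $e$, the set $\{U_e=p\}$ is $\mu$-null, and off this set $\1_{\{U_e\le p_n\}}\to\1_{\{U_e\le p\}}$. Since $S$ is finite, $\deg_{p_n}\to\deg_p$ almost surely. Because $a_{p,s}=\1_{\Om_p^\infty}\1_{\{U_{e_s}\le p\}}$ and $\deg_p\1_{\Om_p^\infty}$ are products, it therefore suffices to show
\[
\1_{\Om_{p_n}^\infty}\to\1_{\Om_p^\infty}\quad\mu\text{-a.s.}
\]

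\emph{Step 2: points where $C_p(\omega)$ is finite.} Suppose $C_p(\omega)$ is finite, and discard the null set on which some $U_e$ equals $p$. Then the edge boundary of $C_p(\omega)$ is a finite set of edges, each with $U_e>p$. Let $q^*>p$ be the minimum of $U_e$ over these edges. For $q<q^*$ we have $C_q(\omega)\subset C_p(\omega)$: this is monotonicity for $q\le p$, and the boundary stays closed for $p<q<q^*$. Hence $\1_{\Om_q^\infty}(\omega)=0$ for all $q<q^*$, and in particular $\1_{\Om_{p_n}^\infty}(\omega)\to0$.

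\emph{Step 3: points where $C_p(\omega)$ is infinite (the main obstacle).} For $p_n\ge p$, monotonicity gives $\1_{\Om_{p_n}^\infty}(\omega)=1$, so only $p_n\uparrow p$ matters. We must rule out, almost surely, the event that $C_p$ is infinite while $C_q$ is finite for every $q<p$; this is left continuity of $\theta$ above $p_c$. I would invoke the theorem of Häggström--Peres--Schonmann on a Cayley graph (transitive and unimodular): for $p_c<p_1<p$, almost surely every infinite $p$-cluster contains an infinite $p_1$-cluster. Fix a countable sequence $p_1^{(k)}\uparrow p$ with $p_1^{(k)}>p_c$, so that almost surely the conclusion holds for all $k$ simultaneously. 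Given $\omega$ in this event with $C_p(\omega)$ infinite, pick an infinite $p_1^{(1)}$-cluster $C'\subset C_p(\omega)$ and a finite $p$-open path from $o$ to $C'$. Let $m$ be the maximum of $U_e$ along this path; off the null set above, $m<p$. For every $q\ge\max\{m,p_1^{(1)}\}$, the cluster $C_q(\omega)$ contains both the path and $C'$, so it is infinite. Since this threshold is $<p$, we get $\1_{\Om_{p_n}^\infty}(\omega)=1$ for all large $n$. The only nontrivial input is this cluster-containment theorem, and it is the point where $p>p_c$ is used. If one preferred to avoid the citation, the alternative would be a direct mass-transport and Burton--Keane-type argument, which is essentially its proof.

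\emph{Step 4: consequences.} Dominated convergence gives the $L^1$ statements and $Z_{p_n}\to Z_p>0$. The densities $f_n=\deg_{p_n}\1_{\Om_{p_n}^\infty}/Z_{p_n}$ then converge in $L^1(\mu)$ to $f=\deg_p\1_{\Om_p^\infty}/Z_p$, by the triangle inequality together with $Z_{p_n}\to Z_p$. This $L^1$ convergence of densities is exactly total variation convergence $\whmu_{p_n}\to\whmu_p$.
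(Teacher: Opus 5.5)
Your proof is correct. It takes a more hands-on route than the paper, although both rest on the same underlying theorem.

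\textbf{The paper's route.} The paper never looks at individual clusters. Under the standard coupling, $q\mapsto\1_{\Om_q^\infty}(\omega)$ is non-decreasing, so its one-sided limits at $p$ exist and sandwich $\1_{\Om_p^\infty}(\omega)$. By monotone convergence, their integrals are $\theta(p^-)$ and $\theta(p^+)$. Continuity of $\theta$ at $p>p_c$, cited from H\"aggstr\"om--Peres (Corollary~1.3), then forces both limits to equal $\1_{\Om_p^\infty}$ almost surely.

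\textbf{Your route.} You prove the same almost sure continuity pathwise. Right-continuity comes from the deterministic finite-edge-boundary argument, which is why $\theta$ is right-continuous on any locally finite graph. Left-continuity comes from the cluster-containment theorem, and this is the very result from which H\"aggstr\"om and Peres deduce continuity of $\theta$ above $p_c$. So you are effectively re-deriving the corollary the paper cites.

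\textbf{Trade-off.} Your version gives an explicit random threshold $\max\{m,p_1\}$ beyond which $C_q(\omega)$ is infinite, and it shows transparently where $p>p_c$ enters. The cost is that you invoke a stronger black box than needed. The monotonicity-plus-continuity-of-$\theta$ argument would let you replace Steps~2 and~3 with a few lines.

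\textbf{Minor simplifications.}
\begin{itemize}
\item In Step~3, one fixed $p_1\in(p_c,p)$ suffices, since your argument only ever uses $p_1^{(1)}$. The sequence $p_1^{(k)}$ is unnecessary.
\item In Step~2, the boundary edges of a finite $p$-cluster satisfy $U_e>p$ by definition of being closed. So $q^*>p$ holds without discarding any null set.
\end{itemize}
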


\begin{proof}
Under the standard coupling, the map $q\mapsto\1_{\Om_q^\infty}(\omega)$ is non-decreasing. Since $\theta$ is continuous at $p$ \cite[Corollary~1.3]{HaggstromPeres1999}, this map is continuous at $p$ for $\mu$-almost every $\omega$. Since $\mu(U_{e_s}=p)=0$ for every $s\in S$, the status of every edge incident to $o$ is eventually the same at $p_n$ and $p$, for $\mu$-almost every $\omega$. Taking products and summing over $S$ gives the asserted convergences almost surely. Uniform boundedness gives convergence in $L^1(\mu)$, and hence $Z_{p_n}\to Z_p$. Since $Z_p>0$, \eqref{eq:muhat} implies convergence of the densities in $L^1(\mu)$, proving convergence in total variation.
\end{proof}

\subsection{Random walks and harmonic measures}

For $\omega\in\Om_p^\infty$ and $s\in S$, we define
\begin{equation}\label{eq:qps}
  q_p(\omega,s) = \frac{\1_{\{U_{e_s}\le p\}}(\omega)}{\deg_p(\omega)}.
\end{equation}
For $g\in C_p(\omega)$, let $P_{p,g}^\omega\in\Prob(\G^{\Z_{\ge0}})$ be the quenched law of simple random walk on $C_p(\omega)$, started at $g$. Its transition probabilities are given by
\[
  P_{p,g}^\omega(X_{n+1}=xs\mid X_n=x) = q_p(x^{-1}\omega,s), \quad x\in C_p(\omega),\ s\in S.
\]
We write $P_p^\omega=P_{p,o}^\omega$.

We use a bilateral version of this walk. We first sample $\omega$ according to $\whmu_p$ and then, conditionally on $\omega$, sample $(X_n)_{n\ge0}$ and $(X_{-n})_{n\ge0}$ independently, both with law $P_p^\omega$. We denote the resulting probability measure on $\Om\times\G^\Z$ by $\Lambda_p$ and define the shift by
\[
  T\bigl(\omega,(X_n)_{n\in\Z}\bigr) = \bigl(X_1^{-1}\omega,(X_1^{-1}X_{n+1})_{n\in\Z}\bigr).
\]

Under the degree-biased law, the environment chain is reversible. Indeed, the joint law of the environment and the first step satisfies
\[
  \whmu_p(\dd\omega)q_p(\omega,s) = \frac1{Z_p}a_{p,s}(\omega)\mu(\dd\omega), \quad s\in S.
\]
Since
\[
  a_{p,s}(\omega) = a_{p,s^{-1}}(s^{-1}\omega)
\]
and $\mu$ is invariant under the action of $\G$, this measure is invariant under the involution
\[
  (\omega,s)\mapsto(s^{-1}\omega,s^{-1}).
\]

\begin{proposition}\label{prop:bilateral}
For every $p>p_c$, the system $(\Om\times\G^\Z,\Lambda_p,T)$ is invertible, measure-preserving, and ergodic. Moreover, there exist deterministic constants $\ell(p),h(p)>0$ such that, for $\mu_p^\infty$-almost every $\omega$ and $P_p^\omega$-almost every path $x=(x_n)_{n\ge0}$,
\begin{equation}\label{eq:regularity}
  \frac{|x_n|}{n}\to\ell(p), \quad -\frac1n\log P_p^\omega(X_n=x_n)\to h(p).
\end{equation}
\end{proposition}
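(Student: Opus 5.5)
We prove Proposition~\ref{prop:bilateral} by reducing everything to the environment viewed from the particle, which is stationary and ergodic under the degree-biased law $\whmu_p$, and then applying Kingman's subadditive ergodic theorem to the bilateral system.

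\emph{Invertibility and invariance.} The inverse of $T$ is explicit: $T^{-1}(\omega,(X_n)) = (X_{-1}^{-1}\omega,(X_{-1}^{-1}X_{n-1})_n)$, using $X_0=o$. For invariance we check the finite-dimensional marginals. Fix a window $(X_{-m},\dots,X_k)$ and its increments $s_i=X_{i-1}^{-1}X_i$. Under $\Lambda_p$ the joint law of $\omega$ and this window is
\[
  \whmu_p(\dd\omega)\prod_{i}q_p(X_{i-1}^{-1}\omega,s_i),
\]
where the backward half is read through reversed increments. By the reversibility identity $\whmu_p(\dd\omega)q_p(\omega,s)=Z_p^{-1}a_{p,s}(\omega)\mu(\dd\omega)$ and the symmetry $a_{p,s}(\omega)=a_{p,s^{-1}}(s^{-1}\omega)$, this law can be rewritten as a product that is invariant under moving the root along the path. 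Concretely, the weight of a path segment is $\mu(\dd\omega)$ times a product of edge-openness indicators divided by the degrees at the interior vertices, with the infinite-cluster indicator attached to the root. Rerooting at $X_1$ and translating by $X_1^{-1}$ preserves $\mu$, preserves $\1_{\Om_p^\infty}$ (since $X_1\in C_p(\omega)$), and preserves this product. Hence $T_*\Lambda_p=\Lambda_p$.

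\emph{Ergodicity.} This is the step I expect to be the main obstacle. The natural route is the standard fact (e.g.\ Lyons--Peres, for unimodular random rooted graphs, or BLS~\cite{BLS1999}) that the environment seen from the walker is ergodic under $\whmu_p$ whenever $\mu_p^\infty$ is ergodic for re-rooting invariant events. That hypothesis holds here because Bernoulli percolation is $\G$-ergodic: an event invariant under rerooting within the cluster is, modulo the conditioning, a $\G$-invariant event for $\mu$ restricted to $\{o\in\text{infinite cluster}\}$, so it is trivial. This gives ergodicity of the one-sided chain. To pass to the bilateral shift, note that the two halves are conditionally independent Markov chains given $\omega$. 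A $T$-invariant function is then measurable with respect to the tail of the forward path together with $\omega$, and also with respect to the past. A standard argument, conditioning on the past and using that the forward walk has law $P^\omega$, reduces it to a function of the environment chain that is invariant, hence constant.

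\emph{Drift and entropy.} Set $f_n=|X_n|$. Then $f_{n+m}\le f_n+f_m\circ T^n$, and Kingman's theorem gives $|X_n|/n\to\ell(p)$ almost surely. For the entropy, let $g_n=-\log P_p^\omega(X_n)$. The Markov property gives
\[
  P_p^\omega(X_{n+m}=x_{n+m})\ge P_p^\omega(X_n=x_n)\,P_p^{x_n^{-1}\omega}(X_m=x_n^{-1}x_{n+m}),
\]
so $g$ is subadditive along $T$, with $g_1\le\log|S|$ integrable. Kingman's theorem therefore gives $g_n/n\to h(p)$. Both statements transfer from $\whmu_p$ to $\mu_p^\infty$ by equivalence of the measures, and determinism of the limits follows from ergodicity.

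\emph{Positivity.} Positivity is \cite[Theorem~4.4, Lemma~4.6]{BLS1999}. Alternatively, $h(p)>0$ follows from non-amenability and exponential decay of return probabilities on the cluster (BLS), and $\ell(p)\ge h(p)/v$ with $v$ the growth rate, by the fundamental inequality.
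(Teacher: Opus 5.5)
You have a genuine gap in the ergodicity step. The rest of your outline matches the paper, whose proof records invertibility, derives invariance from reversibility, quotes ergodicity from \cite[Theorem~5.5]{Sakamoto2024}, applies Kingman's theorem and cites \cite{BLS1999} for positivity. Your path-weight check of $T$-invariance, the two subadditivity inequalities, and the passage from the environment chain to the bilateral shift are fine.

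The problem is your claim that a rerooting-invariant event is ``modulo the conditioning, a $\G$-invariant event''. A rerooting-invariant $A\subset\Om_p^\infty$ is a property of the cluster of $o$, possibly depending on all labels $U_e$. Moreover, $g\omega\in A$ holds iff the cluster of $g^{-1}$ in $\omega$, viewed from $g^{-1}$, has this property. So $A$ is invariant only under translations that stay inside $C_p(\omega)$, and conditioning on $\Om_p^\infty$ does not change this. The $\G$-invariant events you can build from $A$, such as ``some infinite cluster has the property'', are indeed trivial. But that is compatible with $0<\whmu_p(A)<1$: it only says that almost surely some infinite clusters have the property and, by the same reasoning for the complement, some do not. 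Ruling this out is exactly the Lyons--Schramm indistinguishability theorem for insertion-tolerant invariant percolation on unimodular transitive graphs, in a form allowing properties that depend on the labels of the standard coupling. Insertion tolerance is essential there; $\G$-ergodicity alone does not exclude infinite clusters of different types.

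As written, your argument covers only $p>p_u$, where the infinite cluster is unique. Here $(p_c,p_u)$ is nonempty by \cite{Hutchcroft2019}, and it is the whole supercritical phase when $\G$ has infinitely many ends, since then $p_u=1$ (e.g.\ free groups). The indistinguishability input is what the paper imports via \cite[Theorem~5.5]{Sakamoto2024}.

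A smaller point: your ``alternative'' route to positivity fails as stated. Quenched return probabilities on a supercritical cluster are not exponentially small, because the cluster contains arbitrarily long dangling paths and other finite traps. A walk that travels to a trap of size of order $n^{1/3}$ and stays there shows that $P_p^\omega(X_{2n}=o)$ decays only stretched-exponentially. So positivity of $h(p)$, and hence of $\ell(p)$, should be taken from \cite[Theorem~4.4, Lemma~4.6]{BLS1999}, as in your main line and in the paper.
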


\begin{proof}
Invertibility is immediate from the bilateral construction, while measure preservation follows from reversibility. The proof of \cite[Theorem~5.5]{Sakamoto2024} applies equally to the standard coupling used here and gives ergodicity.

The limits in \eqref{eq:regularity} follow from Kingman's subadditive ergodic theorem and are deterministic by ergodicity. As mentioned in the introduction, positivity of the drift and the asymptotic entropy is due to Benjamini, Lyons, and Schramm \cite[Theorem~4.4, Lemma~4.6]{BLS1999}.
\end{proof}

We next define the harmonic measure and introduce the sublinear tracking property used below. By \eqref{eq:regularity},
\[
  \frac{|X_n|}{n}\to\ell(p)>0,
\]
and since we consider nearest-neighbor walks,
\[
  \frac{d(X_n,X_{n+1})}{n}\to0.
\]
Hence \cite[Theorem~7.2]{Kaimanovich2000} yields convergence to the Gromov boundary and sublinear tracking along a geodesic ray. Conditional on $\omega$, both halves of a $\Lambda_p$-distributed path have law $P_p^\omega$, so the limits
\[
  \bnd^+(X)=\lim_{n\to\infty}X_n, \quad \bnd^-(X)=\lim_{n\to\infty}X_{-n}
\]
exist for $\Lambda_p$-almost every bilateral path.

By measurable selection, we can fix a Borel choice $\zeta\mapsto\gamma_\zeta$ of a geodesic ray from $o$ to $\zeta$. For $t\ge0$, we write $\gamma_\zeta(t)$ for $\gamma_\zeta(\lfloor t\rfloor)$. Then the tracking property can be written as
\begin{equation}\label{eq:tracking}
  \frac1n d\bigl(X_n,\gamma_{\bnd^+(X)}(\ell(p)n)\bigr) \to0
\end{equation}
for $\mu_p^\infty$-almost every $\omega$ and $P_p^\omega$-almost every path $X=(X_n)_{n\ge0}$.

\begin{definition}[Harmonic measure]\label{def:harmonic-measure}
For $\omega\in\Om_p^\infty$, the \emph{harmonic measure} is the hitting measure
\[
  \nu_\omega^p = (\bnd^+)_*P_p^\omega \in\Prob(\partial\Gamma).
\]
\end{definition}

Since the transition probabilities depend measurably on the environment, so do the path laws $P_p^\omega$. The set of paths converging to the Gromov boundary is Borel, and the limit map, extended by a fixed boundary point outside this set, is Borel. Hence $\omega\mapsto\nu_\omega^p$ is measurable. For $s\in S$ with $q_p(\omega,s)>0$, let $\nu_{\omega,s}^p$ denote the conditional law of the boundary point given $X_1=s$. The Markov property and equivariance give
\[
  \nu_{\omega,s}^p=(\bnd^+)_*P_{p,s}^\omega=s_{*}(\bnd^+)_*P_p^{s^{-1}\omega}=s\nu_{s^{-1}\omega}^p,
\]
and hence averaging over the first step gives
\begin{equation}\label{eq:harmonic-stationarity}
  \nu_\omega^p=\sum_{s:\,q_p(\omega,s)>0}q_p(\omega,s)s\nu_{s^{-1}\omega}^p
\end{equation}
for $\mu_p^\infty$-almost every $\omega$.

For the later arguments, we need non-atomicity of harmonic measure. By \cite[Corollary~5.10]{Sakamoto2024}, $\nu_\omega^p$ is exact dimensional with positive dimension $h(p)/(\eps\ell(p))$, and is therefore non-atomic. For completeness, we give a direct proof.

\begin{lemma}\label{lem:harmonic-nonatomic}
For every $p>p_c$, the measure $\nu_\omega^p$ is non-atomic for $\mu_p^\infty$-almost every $\omega$.
\end{lemma}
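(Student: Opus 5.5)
The plan is to reduce non-atomicity to the fact that two independent walks in the same environment a.s.\ have different limit points. Fix $p>p_c$ and define, for $\omega\in\Om_p^\infty$,
\[
  M(\omega)=\max_{\zeta\in\partial\Gamma}\nu_\omega^p(\{\zeta\}),
\]
together with the total atomic mass $A(\omega)=\sum_\zeta \nu_\omega^p(\{\zeta\})^2$. The latter equals the probability that two independent copies $X,X'$ of the walk under $P_p^\omega$ satisfy $\bnd^+(X)=\bnd^+(X')$. So $A(\omega)=0$ a.s.\ is equivalent to the claim, and the task becomes showing that coincidence of limit points has probability zero.

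First I will use the stationarity relation \eqref{eq:harmonic-stationarity} to obtain a zero--one type statement. Iterating it along the walk gives the martingale $\nu^p_{X_n^{-1}\omega}$ transported by $X_n$: for fixed $\zeta$, the process
\[
  m_n=\bigl(X_n\nu^p_{X_n^{-1}\omega}\bigr)(\{\zeta\})=P_p^\omega\bigl(\bnd^+=\zeta\mid X_0,\dots,X_n\bigr)
\]
is a bounded martingale. By L\'evy's zero--one law it converges to $\1_{\{\bnd^+(X)=\zeta\}}$. Hence, if $\nu_\omega^p$ has an atom at $\zeta$, then with positive probability $\nu^p_{X_n^{-1}\omega}(\{X_n^{-1}\zeta\})\to1$. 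In particular $M(X_n^{-1}\omega)\to 1$ along such paths.

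Next I will use invariance of the environment process. Under $\whmu_p$ the environment viewed from the walker, $X_n^{-1}\omega$, is stationary and ergodic (Proposition~\ref{prop:bilateral}). Since $M$ is a measurable function of the environment, $M(X_n^{-1}\omega)$ is a stationary sequence. If the atomic part were nontrivial on a set of positive measure, the previous step would force $M(X_n^{-1}\omega)\to1$ with positive probability. By stationarity and Poincar\'e recurrence this gives $\whmu_p(M>1-\eta)\ge c>0$ for every $\eta>0$, hence $M=1$ with positive probability. By ergodicity, $M$ being a.s.\ constant along orbits (it is $\ge$ a martingale-type quantity), we get $M=1$ a.s. Thus $\nu_\omega^p$ would be a Dirac mass $\delta_{\zeta(\omega)}$ for a.e.\ $\omega$.

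The final step, which I expect to be the main obstacle, is to rule out the Dirac case. If $\nu_\omega^p=\delta_{\zeta(\omega)}$, then \eqref{eq:harmonic-stationarity} forces $s\zeta(s^{-1}\omega)=\zeta(\omega)$ for every open neighbor $s$. Hence all walks in the cluster, from any starting point, converge to the same boundary point. Now consider the bilateral path under $\Lambda_p$. Its two halves are independent walks from $o$ in the same environment, so $\bnd^+(X)=\bnd^-(X)=\zeta(\omega)$ a.s. On the other hand, the concatenated path $(X_n)_{n\in\Z}$ makes $X_{-n}$ and $X_n$ track geodesic rays at linear speed $\ell(p)>0$. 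Applying the shift $T$ (which is measure preserving) and viewing from $X_n$, the backward limit seen from $X_n$ is $X_n^{-1}\bnd^-$ and the forward limit is $X_n^{-1}\bnd^+$. Both equal $\zeta(X_n^{-1}\omega)$ by equivariance.

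I would derive the contradiction via the Gromov product. By the tracking property \eqref{eq:tracking}, $(X_{-n}|X_n)_o$ stays bounded, since $X_{-n}$ and $X_n$ are far apart at distance $\approx 2\ell(p)n$. Applying $T^n$ and stationarity, the distribution of $(o|X_{2n})$-type products seen from the midpoint is the same. Concretely, $(X_{-n}|X_n)_o=\tfrac12(|X_{-n}|+|X_n|-d(X_{-n},X_n))$, and after shifting by $T^{n}$ this equals $(X_{-2n}... )$ relations, which give $d(X_{-n},X_n)\approx |X_{-n}|+|X_n|$ with sublinear error. This is incompatible with both halves converging to the same point, which would force $d(X_{-n},X_n)=o(n)$. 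The backup argument, if this is delicate, is the entropy route: a Dirac harmonic measure gives a trivial Poisson boundary along the tail, contradicting $h(p)>0$ via the Kaimanovich--Vershik type argument. For this lemma, though, I would aim for the geometric contradiction.
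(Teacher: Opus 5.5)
Your opening reduction matches where the paper starts: $\Lambda_p(\bnd^+=\bnd^-)=\int A\,\dd\whmu_p$, so it suffices to show that the two halves of the bilateral path a.s.\ have distinct limits. Your dichotomy step ($M\equiv0$ or $M\equiv1$) is also correct in substance. Stationarity gives $M(\omega)\le\sum_s q_p(\omega,s)M(s^{-1}\omega)$, so $M(\omega_n)$ is a bounded submartingale with stationary marginals, hence a.s.\ constant in $n$, hence constant by ergodicity.

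\textbf{The gap.} It lies in the step that actually carries the lemma, ruling out the Dirac case. You assert that tracking makes $(X_{-n}|X_n)_o$ bounded ``since $X_{-n}$ and $X_n$ are at distance $\approx 2\ell(p)n$'', but this is not justified. Tracking alone cannot tell whether the two rays differ; boundedness of the Gromov product is equivalent to $\bnd^+\ne\bnd^-$. The lower bound on $d(X_{-n},X_n)$ is exactly what needs proof, and the promised ``$(X_{-2n}\dots)$ relations'' never materialize.

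\textbf{The missing identity.} The path $T^{-n}(\omega,X)$ sits at $X_{-n}^{-1}X_n$ at time $2n$. So $T$-invariance of $\Lambda_p$ gives $d(X_{-n},X_n)\stackrel{\mathrm{law}}{=}|X_{2n}|$, and hence $d(X_{-n},X_n)/(2n)\to\ell(p)>0$ in probability. On the other hand, on $\{\bnd^+=\bnd^-\}$, tracking along the common ray forces $d(X_{-n},X_n)=o(n)$ almost surely. (Even the distance bound only yields $(X_{-n}|X_n)_o=o(n)$ rather than boundedness, which is still enough for the contradiction.)

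\textbf{The detour is unnecessary.} With this identity you do not need to reduce to the Dirac case. Apply Fatou's lemma to $\1_{\{d(X_{-n},X_n)<\ell(p)n\}}$ on $\{\bnd^+=\bnd^-\}$: this gives $\Lambda_p(\bnd^+=\bnd^-)=0$ directly, i.e.\ $A=0$ almost surely. This is precisely the paper's proof, and it makes your martingale and ergodicity steps superfluous.

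The dichotomy would only be needed for your entropy backup, which can only detect the Dirac case. There the right mechanism is not an appeal to a trivial Poisson boundary but Theorem~\ref{thm:kv}, whose proof does not use non-atomicity. If $\nu_\omega^p=\delta_{\zeta(\omega)}$ with $s\zeta(s^{-1}\omega)=\zeta(\omega)$, every divergence in \eqref{eq:kv} vanishes. Then $h(p)=0$, contradicting the Benjamini--Lyons--Schramm positivity. That route is valid but uses much heavier machinery than the one-line stationarity argument.
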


\begin{proof}
Let
\[
  A=\{(\omega,X)\in\Om\times\G^\Z:\bnd^+(X)=\bnd^-(X)\}.
\]
On $A$, we write $\xi=\bnd^+(X)=\bnd^-(X)$. Applying \eqref{eq:tracking} to both half-paths with the same ray $\gamma_\xi$ gives
\begin{equation}\label{eq:same-boundary-sublinear}
  d(X_{-n},X_n) \le d\bigl(X_{-n},\gamma_\xi(\ell(p)n)\bigr) +d\bigl(X_n,\gamma_\xi(\ell(p)n)\bigr) =o(n).
\end{equation}

By $T$-invariance of $\Lambda_p$,
\[
  d(X_{-n},X_n)=|X_{-n}^{-1}X_n| \ \stackrel{\mathrm{law}}{=}\ |X_{2n}|.
\]
Indeed, after shifting the bilateral path by $n$, the relative position at time $2n$ is $X_{-n}^{-1}X_n$. Since $\ell(p)>0$, \eqref{eq:regularity} gives
\[
  \Lambda_p\left(\frac{d(X_{-n},X_n)}{2n}<\frac{\ell(p)}2\right) =\Lambda_p\left(\frac{|X_{2n}|}{2n}<\frac{\ell(p)}2\right) \to0.
\]
By \eqref{eq:same-boundary-sublinear},
\[
  \1_A\le\liminf_{n\to\infty} \1_{\{d(X_{-n},X_n)/(2n)<\ell(p)/2\}}
\]
$\Lambda_p$-almost surely. Fatou's lemma therefore yields $\Lambda_p(A)=0$.

Conditional on $\omega$, the two boundary limits are independent with law $\nu_\omega^p$. Since they are almost surely distinct, $\nu_\omega^p$ is non-atomic for $\whmu_p$-almost every $\omega$, and hence for $\mu_p^\infty$-almost every $\omega$.
\end{proof}

\section{Stationary boundary families}\label{sec:stationary}

Throughout this section, the symbol $B$ denotes either $\partial\Gamma$ or $\hbdG$. For a probability measure $\lambda$ on $B$, we write $g\lambda=g_*\lambda$. We regard boundary families as probability measures on $\Om\times B$ with first marginal $\mu$, so that families at different parameters belong to the same compact space. To extend them from $\Om_p^\infty$ to all of $\Om$, we fix $\xi_0\in\hbdG$, write $\zeta_0=\pi_h(\xi_0)$, and set
\[
  \delta_B=\begin{cases}
    \delta_{\xi_0},&B=\hbdG,\\
    \delta_{\zeta_0},&B=\partial\Gamma.
  \end{cases}
\]

\subsection{Stationary families}

Let
\[
  \mathcal P_\mu(B)=\{\eta\in\Prob(\Om\times B):(\pi_\Om)_*\eta=\mu\}.
\]
This is convex and compact in the weak topology. For $\eta \in \mathcal P_\mu(B)$, we write its disintegration as
\[
  \eta(\dd\omega,\dd\xi)=\mu(\dd\omega)\eta_\omega(\dd\xi).
\]

\begin{definition}[Stationary family]\label{def:stationary-family}
A measure $\eta\in\mathcal P_\mu(B)$ is called \emph{$p$-stationary} if
\begin{enumerate}[label=(\roman*)]
\item $\eta_\omega=\delta_B$ for $\mu$-almost every $\omega\notin\Om_p^\infty$;
\item for $\mu$-almost every $\omega\in\Om_p^\infty$,
\begin{equation}\label{eq:stationarity-pointwise}
  \eta_\omega=\sum_{s\in S}q_p(\omega,s)s\eta_{s^{-1}\omega}.
\end{equation}
\end{enumerate}
The set of such measures is denoted by $\mathcal S_p(B)$.
\end{definition}

It is straightforward to see that condition~(ii) is equivalent to
\begin{align}
  \int_{\Om_p^\infty}\deg_p(\omega) \int_B F(\omega,\xi)\dd\eta_\omega(\xi)\dd\mu(\omega)= \sum_{s\in S}\int_\Om a_{p,s}(\omega) \int_B F(\omega,s\xi)\dd\eta_{s^{-1}\omega}(\xi)\dd\mu(\omega) \label{eq:integrated-stationarity}
\end{align}
for every bounded Borel function $F$ on $\Om\times B$.

\begin{lemma}\label{lem:weighted-weak-convergence}
Let $B\in\{\partial\Gamma,\hbdG\}$. Suppose $\eta_n\to\eta$ in $\mathcal P_\mu(B)$ and $b_n,b\in L^\infty(\Om,\mu)$ satisfy $b_n\to b$ in $L^1(\mu)$. Then the finite signed measures
\[
  b_n(\omega)\eta_n(\dd\omega,\dd\xi)
\]
converge weakly to $b(\omega)\eta(\dd\omega,\dd\xi)$.
\end{lemma}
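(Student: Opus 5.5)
The plan is to test the claimed weak convergence against a bounded continuous function $F$ on $\Om\times B$ and split the difference into two terms:
\[
  \int b_nF\dd\eta_n-\int bF\dd\eta
  =\int (b_n-b)F\dd\eta_n+\Bigl(\int bF\dd\eta_n-\int bF\dd\eta\Bigr).
\]

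For the first term, note that $\eta_n$ has first marginal $\mu$. Hence
\[
  \Bigl|\int (b_n-b)F\dd\eta_n\Bigr|\le \|F\|_\infty\int_\Om|b_n-b|\dd\mu\to0.
\]

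For the second term, we cannot simply invoke weak convergence, because $b$ is only bounded measurable and not continuous. The plan is to approximate $b$ in $L^1(\mu)$ by continuous functions. Since $\Om$ is compact metrizable and $\mu$ is a Borel probability measure, $C(\Om)$ is dense in $L^1(\mu)$. So, given $\varepsilon>0$, we choose $c\in C(\Om)$ with $\|b-c\|_{L^1(\mu)}<\varepsilon$. The function $(\omega,\xi)\mapsto c(\omega)F(\omega,\xi)$ is bounded and continuous on $\Om\times B$, and $\Om\times B$ is compact metrizable because $B$ is. Weak convergence $\eta_n\to\eta$ therefore gives $\int cF\dd\eta_n\to\int cF\dd\eta$. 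Using the marginal property again, the errors are controlled uniformly in $n$:
\[
  \Bigl|\int (b-c)F\dd\eta_n\Bigr|\le\|F\|_\infty\varepsilon,
  \qquad
  \Bigl|\int (b-c)F\dd\eta\Bigr|\le\|F\|_\infty\varepsilon.
\]
It follows that $\limsup_n$ of the second term is at most $2\|F\|_\infty\varepsilon$. Since $\varepsilon$ is arbitrary, the second term tends to $0$.

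The key point, and the only real obstacle, is that the measurable factor $b$ is handled through the fixed first marginal $\mu$. This turns $L^1(\mu)$ approximation into uniform control over all the $\eta_n$. If one needs convergence against bounded Borel $F$ that are continuous in $\xi$ only, the same argument works: approximate $F$ by functions of product form, or use the same density argument in $L^1(\mu;C(B))$. As stated, however, weak convergence only requires continuous $F$.
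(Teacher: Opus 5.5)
Your proof is correct and is essentially the paper's argument. You control the replacement of $b_n$ by $b$ by $\|F\|_\infty\|b_n-b\|_{L^1(\mu)}$ using the fixed first marginal $\mu$, and then pass to the limit with $b$ by approximating it in $L^1(\mu)$ by continuous functions and applying weak convergence.
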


\begin{proof}
Let $F\in C(\Om\times B)$. Since all measures have first marginal $\mu$, replacing $b_n$ by $b$ changes the integral by at most $\|F\|_\infty\|b_n-b\|_{L^1(\mu)}$. The claim follows by approximating $b$ in $L^1(\mu)$ by continuous functions and using weak convergence.
\end{proof}

\begin{proposition}\label{prop:stationary-existence}
For every $p>p_c$ and every $B\in\{\partial\Gamma,\hbdG\}$, the set $\mathcal S_p(B)$ is nonempty.
\end{proposition}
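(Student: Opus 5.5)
For $B=\partial\Gamma$ an element of $\mathcal S_p(\partial\Gamma)$ can be written down directly from harmonic measure. Define $\eta^{\partial}$ by $\eta^{\partial}_\omega=\nu_\omega^p$ for $\omega\in\Om_p^\infty$ and $\eta^{\partial}_\omega=\delta_{\zeta_0}$ otherwise. By the measurability remark after Definition~\ref{def:harmonic-measure}, $\omega\mapsto\eta^\partial_\omega$ is a measurable kernel, so $\eta^{\partial}(\dd\omega,\dd\zeta)=\mu(\dd\omega)\eta^\partial_\omega(\dd\zeta)$ lies in $\mathcal P_\mu(\partial\Gamma)$. Condition~(i) holds by construction.

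Condition~(ii) is exactly \eqref{eq:harmonic-stationarity}, up to one detail. If $q_p(\omega,s)>0$ and $\omega\in\Om_p^\infty$, then the edge $e_s$ is open, so $s$ lies in the infinite cluster $C_p(\omega)$. Hence $s^{-1}\omega\in\Om_p^\infty$, and $\eta^\partial_{s^{-1}\omega}=\nu^p_{s^{-1}\omega}$ is the harmonic measure rather than the default $\delta_{\zeta_0}$. Terms with $q_p(\omega,s)=0$ contribute nothing. The exceptional null set in \eqref{eq:harmonic-stationarity} is harmless, since $\mu$ is $\G$-invariant and so its translates are null.

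For $B=\hbdG$ there is no hitting measure, because the walk need not converge in the horoboundary. I would use a Cesàro/Krylov--Bogolyubov averaging argument in $\mathcal P_\mu(\hbdG)$ via the integrated form \eqref{eq:integrated-stationarity}. Define an operator $\mathcal K$ on $\mathcal P_\mu(\hbdG)$ by $(\mathcal K\eta)_\omega=\sum_s q_p(\omega,s)s\eta_{s^{-1}\omega}$ for $\omega\in\Om_p^\infty$ and $(\mathcal K\eta)_\omega=\delta_{\xi_0}$ otherwise. It maps $\mathcal P_\mu$ to itself and is affine, and its fixed points are exactly $\mathcal S_p(\hbdG)$.

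Start from $\eta^0$ with $\eta^0_\omega=\delta_{\xi_0}$ for all $\omega$. Set $\bar\eta^N=\frac1N\sum_{k<N}\mathcal K^k\eta^0$, noting that $(\mathcal K^k\eta^0)_\omega$ is the law of $X_k\xi_0$ under $P^\omega_p$. By compactness, a subsequence converges weakly to some $\eta$. Since $\mathcal K\bar\eta^N-\bar\eta^N=\frac1N(\mathcal K^N\eta^0-\eta^0)$ has total mass at most $2/N$, it suffices to show that $\mathcal K$ is weakly continuous along this sequence.

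That continuity is the main obstacle, because $q_p$ and $\1_{\Om_p^\infty}$ are discontinuous in $\omega$. I would handle it by testing against $F\in C(\Om\times\hbdG)$ and using the reformulation \eqref{eq:integrated-stationarity}. There the pairing $\int F\,\dd(\deg_p\cdot\mathcal K\eta)$ becomes a sum over $s$ of integrals of $a_{p,s}(\omega)F(\omega,s\xi)$ against the translate $(\omega,\xi)\mapsto(s\omega,s\xi)$ of $\eta$. Since $\mu$ is $\G$-invariant and the action is continuous, these translates converge weakly. The bounded discontinuous weights $a_{p,s}$ and $\deg_p\1_{\Om^\infty_p}$ are then handled by Lemma~\ref{lem:weighted-weak-convergence}, with the constant sequence $b_n=b$.

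Dividing by $\deg_p\ge1$ on $\Om_p^\infty$ identifies the disintegration there. The limit also satisfies $\eta_\omega=\delta_{\xi_0}$ off $\Om_p^\infty$, because every $\bar\eta^N$ does. To see this, apply Lemma~\ref{lem:weighted-weak-convergence} with $b=\1_{\Om\setminus\Om_p^\infty}$ and a test function vanishing at $\xi_0$. Hence $\eta\in\mathcal S_p(\hbdG)$.

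Alternatively, one could lift $\eta^\partial$ through $\pi_h$, but the averaging argument avoids measurable selection and is the route I would take.
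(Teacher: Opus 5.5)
Your proof is correct. For $\hbdG$ you use the same operator as the paper but a different mechanism to produce the fixed point.

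\textbf{The paper's route.} The paper works on $\mathcal P_{\whmu_p}(B)$ with $K_p$ defined by $\int F\,\dd(K_p\lambda)=\sum_{s}\int q_p(\omega,s)F(s^{-1}\omega,s^{-1}\xi)\,\dd\lambda(\omega,\xi)$. This is the Markov operator of the environment-plus-boundary-point chain, and by reversibility its disintegration is exactly your $\sum_s q_p(\omega,s)\,s\lambda_{s^{-1}\omega}$. Continuity comes from the Portmanteau theorem. Because the marginal is fixed to be $\whmu_p$, the only discontinuous weights are the $\widetilde q_{p,s}$, which are continuous off a null set. So the indicator $\1_{\Om_p^\infty}$, which is discontinuous at every point of $\Om_p^\infty$, never has to be tested. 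Schauder's theorem then gives a fixed point for both choices of $B$ at once.

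\textbf{Your route.} You keep the marginal $\mu$. You absorb the discontinuous weights $a_{p,s}$ and $\deg_p\1_{\Om_p^\infty}$ through $\G$-invariance of $\mu$ and Lemma~\ref{lem:weighted-weak-convergence}. In effect this reruns the proof of Lemma~\ref{lem:stationary-stability} with $p_n=p$ on approximately stationary families. You replace Schauder by Krylov--Bogolyubov averaging, which needs only compactness and sequential continuity.

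\textbf{What each buys.} Your approach is more elementary, reuses tools the paper needs anyway, and exhibits the family explicitly as a subsequential Ces\`aro limit of the laws of $X_k\xi_0$. The paper's approach handles both boundaries in one stroke without ever touching $\1_{\Om_p^\infty}$. Your separate harmonic-measure treatment of $\partial\Gamma$ is valid, and checking $s^{-1}\omega\in\Om_p^\infty$ is the right detail. It is not needed, though, since your averaging argument works verbatim for any compact metrizable $B$ with a continuous $\G$-action.

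\textbf{Two small remarks.} First, in the continuity step, when integrating against the push-forward of $\eta$ under $\Psi_s(\omega,\xi)=(s\omega,s\xi)$, the integrand should be $a_{p,s}(\omega)F(\omega,\xi)$, not $a_{p,s}(\omega)F(\omega,s\xi)$. Second, your closing aside about lifting through $\pi_h$ is not a genuine alternative. A measurable section of $\pi_h$ need not be $\G$-equivariant, so pushing $\eta^\partial$ through it does not preserve \eqref{eq:stationarity-pointwise}. You would still need a fixed-point argument over the fibres of $\pi_h$.
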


\begin{proof}
We first construct an invariant measure with marginal $\whmu_p$. Let
\[
  \mathcal P_{\whmu_p}(B) = \{\lambda\in\Prob(\Om\times B):(\pi_\Om)_*\lambda=\whmu_p\}.
\]
Since $\Om\times B$ is compact and metrizable, this is a nonempty compact convex subset of $\Prob(\Om\times B)$. We define an affine operator $K_p$ by
\[
  \int F\dd(K_p\lambda) = \sum_{s\in S}\int q_p(\omega,s) F(s^{-1}\omega,s^{-1}\xi)\dd\lambda(\omega,\xi)
\]
for bounded Borel functions $F$. By reversibility, $K_p\lambda$ has first marginal $\whmu_p$, so $K_p$ maps $\mathcal P_{\whmu_p}(B)$ into itself.

To prove weak continuity of $K_p$, we extend $q_p(\cdot,s)$ to all of $\Om$ by
\[
  \widetilde q_{p,s}(\omega) = \frac{\1_{\{U_{e_s}\le p\}}(\omega)} {\max\{\deg_p(\omega),1\}}.
\]
Each $\widetilde q_{p,s}$ is bounded, agrees with $q_p(\cdot,s)$ for $\whmu_p$-almost every $\omega$, and is continuous outside the $\whmu_p$-null set $\bigcup_{t\in S}\{\omega:U_{e_t}(\omega)=p\}$. Since all measures in $\mathcal P_{\whmu_p}(B)$ have first marginal $\whmu_p$, the Portmanteau theorem implies that $K_p$ is weakly continuous. Then Schauder's fixed-point theorem gives a fixed point $\lambda\in\mathcal P_{\whmu_p}(B)$. We write
\[
  \lambda(\dd\omega,\dd\xi) = \whmu_p(\dd\omega)\lambda_\omega(\dd\xi)
\]
for its disintegration.
For every bounded Borel function $f$ on $\Om$ and every $\phi\in C(B)$, the fixed-point identity and reversibility give
\[
  \int_\Om f(\omega) \int_B\phi(\xi)\dd\lambda_\omega(\xi) \dd\whmu_p(\omega) = \sum_{s\in S}\int_\Om f(\omega)q_p(\omega,s) \int_B\phi(s\xi)\dd\lambda_{s^{-1}\omega}(\xi) \dd\whmu_p(\omega).
\]
Taking $\phi$ in a countable dense subset of $C(B)$ yields
\[
  \lambda_\omega = \sum_{s\in S}q_p(\omega,s)s\lambda_{s^{-1}\omega}
\]
for $\whmu_p$-almost every $\omega$. Since $\whmu_p$ and $\mu$ are equivalent on $\Om_p^\infty$, we obtain an element of $\mathcal S_p(B)$ by extending the family by $\delta_B$ outside $\Om_p^\infty$.
\end{proof}

Next, we show that stationary families are closed under weak limits when the percolation parameter varies.

\begin{lemma}\label{lem:stationary-stability}
Suppose $p_{n} \to p>p_{c}$, $\eta_n\in\mathcal S_{p_n}(B)$, and $\eta_n\to\eta$ weakly in $\mathcal P_\mu(B)$. Then $\eta\in\mathcal S_p(B)$.
\end{lemma}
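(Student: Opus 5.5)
We check the two conditions of Definition~\ref{def:stationary-family} separately. For (ii) we pass to the limit in the integrated identity \eqref{eq:integrated-stationarity}, using Lemma~\ref{lem:weighted-weak-convergence}. For (i) we use the indicator convergence of Lemma~\ref{lem:environment-continuity}.

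For condition~(i), take $\phi\in C(B)$ and $f\in C(\Om)$, and set $b_n=\1_{\Om\setminus\Om_{p_n}^\infty}f$. Since $\eta_{n,\omega}=\delta_B$ for $\mu$-a.e.\ $\omega\notin\Om_{p_n}^\infty$, we have
\[
  \int b_n(\omega)\phi(\xi)\dd\eta_n=\phi(\delta_B)\int b_n\dd\mu ,
\]
where $\phi(\delta_B)$ denotes $\phi(\xi_0)$ or $\phi(\zeta_0)$. By Lemma~\ref{lem:environment-continuity}, $b_n\to b=\1_{\Om\setminus\Om_p^\infty}f$ in $L^1(\mu)$. Lemma~\ref{lem:weighted-weak-convergence} then gives
\[
  \int_{\Om\setminus\Om_p^\infty} f\int\phi\dd\eta_\omega\dd\mu=\phi(\delta_B)\int_{\Om\setminus\Om_p^\infty}f\dd\mu .
\]
Since this holds for all continuous $f$, it also holds for bounded Borel $f$, because both sides are finite measures in $f$. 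Letting $\phi$ range over a countable dense set yields $\eta_\omega=\delta_B$ for a.e.\ $\omega\notin\Om_p^\infty$.

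For condition~(ii), it suffices to verify \eqref{eq:integrated-stationarity} at parameter $p$ for test functions $F(\omega,\xi)=f(\omega)\phi(\xi)$ with $f\in C(\Om)$ and $\phi\in C(B)$. Indeed, a monotone class argument extends it to bounded Borel $F$, or one can disintegrate directly as in the proof of Proposition~\ref{prop:stationary-existence}. Write the identity at $p_n$ for $\eta_n$.

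The left side is $\int \deg_{p_n}\1_{\Om_{p_n}^\infty} f\phi\dd\eta_n$. Its weight converges in $L^1$ by Lemma~\ref{lem:environment-continuity}, so Lemma~\ref{lem:weighted-weak-convergence} passes it to the limit.

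The right side is the main point, because it involves the shifted family $\eta_{s^{-1}\omega}$ and the action $\xi\mapsto s\xi$. To handle it, we push forward by the homeomorphism $\Theta_s(\omega,\xi)=(s\omega,s\xi)$ of $\Om\times B$; this is continuous because $\G$ acts continuously on both factors. By $\G$-invariance of $\mu$, substituting $\omega=s\omega'$ gives
\[
  \int a_{p_n,s}(\omega)f(\omega)\phi(s\xi)\dd\eta_{n,s^{-1}\omega}(\xi)\dd\mu(\omega)
  =\int a_{p_n,s}(s\omega')f(s\omega')\phi(s\xi)\dd\eta_n(\omega',\xi).
\]
The weight $a_{p_n,s}(s\,\cdot)$ converges in $L^1(\mu)$ to $a_{p,s}(s\,\cdot)$, by Lemma~\ref{lem:environment-continuity} and invariance of $\mu$. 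The function $(\omega',\xi)\mapsto f(s\omega')\phi(s\xi)$ is continuous. Lemma~\ref{lem:weighted-weak-convergence}, applied with the bounded weight $a_{p_n,s}(s\,\cdot)$ and this continuous integrand, therefore gives convergence to the corresponding expression for $\eta$ at parameter $p$. Undoing the substitution recovers the right side of \eqref{eq:integrated-stationarity} at $p$. Equating the two limits proves (ii).

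The step needing the most care is checking that Lemma~\ref{lem:weighted-weak-convergence} tolerates the extra continuous factor $f(s\omega')\phi(s\xi)$ in the integrand. Its proof does: the error bound $\|F\|_\infty\|b_n-b\|_{L^1}$ is unchanged, and $bF$ can be approximated by continuous functions in the same way. Beyond that, one must make sure no discontinuity of $q_p$ enters. This is why we work with the weights $a_{p,s}$ and $\deg_p\1_{\Om_p^\infty}$, which converge in $L^1(\mu)$, rather than with the quotient $q_p$ itself.
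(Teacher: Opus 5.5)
Your proposal is correct and follows essentially the same route as the paper: condition~(i) by passing to the limit in the restriction of $\eta_n$ to $\Om\setminus\Om_{p_n}^\infty$, and condition~(ii) by passing to the limit in \eqref{eq:integrated-stationarity} after the substitution $\omega=s\omega'$, using Lemmas~\ref{lem:environment-continuity} and~\ref{lem:weighted-weak-convergence}. Your use of product test functions $f\otimes\phi$ followed by a monotone class step only spells out something the paper leaves implicit when it tests against continuous $F$, and the ``extra continuous factor'' you worry about needs no separate check, since weak convergence already means convergence against every continuous test function.
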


\begin{proof}
By Lemmas~\ref{lem:environment-continuity} and~\ref{lem:weighted-weak-convergence}, passing to the limit in
\[
  \1_{\Om\setminus\Om_{p_n}^\infty}\eta_n = \1_{\Om\setminus\Om_{p_n}^\infty}(\mu\otimes\delta_B)
\]
gives $\eta_\omega=\delta_B$ for $\mu$-almost every $\omega\notin\Om_p^\infty$.

We pass to the limit in \eqref{eq:integrated-stationarity} for $F\in C(\Om\times B)$. The coefficients $\deg_{p_n}\1_{\Om_{p_n}^\infty}$ and $a_{p_n,s}$ converge in $L^1(\mu)$ by Lemma~\ref{lem:environment-continuity}. On the right-hand side, the change of variables $\omega=s\omega'$ turns the integrand into
\[
  a_{p_n,s}(s\omega')F(s\omega',s\xi),
\]
so Lemma~\ref{lem:weighted-weak-convergence} applies. The same lemma applies to the left-hand side. Hence \eqref{eq:integrated-stationarity} holds at parameter $p$, which gives $\eta\in\mathcal S_p(B)$.
\end{proof}

\subsection{Uniqueness on the Gromov boundary}

We extend the harmonic family to all of $\Om$ by setting
\[
  \nu_\omega^p=\delta_{\zeta_0} \quad \text{for}\,\,\omega\notin\Om_p^\infty,
\]
and define
\[
  \bnu_p(\dd\omega,\dd\xi) = \mu(\dd\omega)\nu_\omega^p(\dd\xi).
\]
By \eqref{eq:harmonic-stationarity}, we have $\bnu_p\in\mathcal S_p(\partial\Gamma)$. In this subsection, we will show that this is the unique element of $\mathcal S_p(\partial\Gamma)$.

We first prove a deterministic contraction lemma. We fix a compatible metric $d_{\partial\Gamma}$ on $\partial\Gamma$ and a countable dense subset $\mathcal D\subset C(\partial\Gamma)$.

\begin{lemma}\label{lem:moving-repeller}
Let $(g_n)_{n\ge1}$ be a sequence in $\G$, let $\alpha,\gamma\in\partial\Gamma$ be distinct, and let $b_n\in\partial\Gamma$ for $n\ge1$. Assume that $g_no\to\alpha$ and that $g_nb_n=\gamma$ for every $n\ge1$. Then for every neighborhood $U$ of $\alpha$ and every $r>0$,
\[
  g_n\bigl(\partial\Gamma\setminus B_{d_{\partial\Gamma}}(b_n,r)\bigr)\subset U
\]
for all sufficiently large $n$.
\end{lemma}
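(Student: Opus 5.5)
The plan is to argue by contradiction and pass to subsequences, so that the statement reduces to Proposition~\ref{prop:boundary-contraction}. The point is that the moving repeller $b_n$ is forced, by the condition $g_nb_n=\gamma\ne\alpha$, to converge to the repelling point $\lim g_n^{-1}o$ along any subsequence on which the latter exists.

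Suppose the claim fails for some neighborhood $U$ of $\alpha$ and some $r>0$. Then there are infinitely many $n$ with a point
\[
  \zeta_n\in\partial\Gamma\setminus B_{d_{\partial\Gamma}}(b_n,r) \quad\text{such that}\quad g_n\zeta_n\notin U.
\]
Since $g_no\to\alpha\in\partial\Gamma$, we have $|g_n^{-1}o|=|g_n|\to\infty$. By compactness of $\overline\G$ and of $\partial\Gamma$, we pass to a further subsequence along which
\[
  g_n^{-1}o\to b\in\partial\Gamma, \quad b_n\to b', \quad \zeta_n\to\zeta,
\]
where $d_{\partial\Gamma}(\zeta,b')\ge r$ because $d_{\partial\Gamma}(\zeta_n,b_n)\ge r$.

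\emph{Key step: $b'=b$.} Suppose instead $b'\ne b$. Choose a closed neighborhood $V$ of $b$ with $b'\notin V$; then $b_n\notin V$ for all large $n$ in the subsequence. Choose a neighborhood $U'$ of $\alpha$ with $\gamma\notin U'$, which is possible since $\alpha\ne\gamma$. Proposition~\ref{prop:boundary-contraction}, applied along the subsequence with $U'$ and $V$, gives $g_nb_n\in U'$ for large $n$. This contradicts $g_nb_n=\gamma$. Hence $b_n\to b$.

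Consequently $d_{\partial\Gamma}(\zeta,b)\ge r$, so for large $n$ the points $\zeta_n$ lie outside the neighborhood $V=B_{d_{\partial\Gamma}}(b,r/2)$ of $b$. Applying Proposition~\ref{prop:boundary-contraction} once more, now with $U$ and this $V$, yields $g_n\zeta_n\in U$ for all large $n$ in the subsequence. This contradicts the choice of $\zeta_n$ and proves the lemma.

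The only delicate point I expect is the key step identifying $\lim b_n$ with the repelling point. It is exactly where the hypotheses $g_nb_n=\gamma$ and $\alpha\ne\gamma$ are used. The rest is bookkeeping with subsequences, together with the observation that Proposition~\ref{prop:boundary-contraction} applies to subsequences.
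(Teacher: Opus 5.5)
Your proof is correct and follows essentially the same route as the paper. You argue by contradiction, extract a subsequence along which $g_n^{-1}o$, $b_n$ and the offending points converge, and use $g_nb_n=\gamma\ne\alpha$ with Proposition~\ref{prop:boundary-contraction} to force $\lim b_n$ to equal the repelling point; then you apply the contraction once more to the offending points. The only difference is cosmetic: you invoke the proposition with explicit neighborhoods, whereas the paper phrases it as locally uniform convergence of $g_n$ to the constant map $\alpha$ away from the repelling point.
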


\begin{proof}
Suppose that the conclusion fails. After passing to a subsequence, we can choose $z_n\in\partial\Gamma$ such that
\[
  d_{\partial\Gamma}(z_n,b_n)\ge r, \,\, g_nz_n\notin U.
\]
By compactness, we may pass to a further subsequence such that
\[
  z_n\to z, \,\, b_n\to c, \,\, g_n^{-1}o\to c_0\in\partial\Gamma.
\]
Since $g_no\to\alpha\in\partial\Gamma$, we have $|g_n|\to\infty$, so the last limit lies in the boundary. By Proposition~\ref{prop:boundary-contraction}, the maps $g_n$ converge locally uniformly to the constant map $\alpha$ on $\partial\Gamma\setminus\{c_0\}$. If $c\ne c_0$, this convergence and $b_n\to c$ give $g_nb_n\to\alpha$, contradicting $g_nb_n=\gamma\ne\alpha$. Thus $c=c_0$.

Since $d_{\partial\Gamma}(z_n,b_n)\ge r$, we have $d_{\partial\Gamma}(z,c_0)\ge r$, so $z\ne c_0$. The same convergence gives $g_nz_n\to\alpha$, contradicting $g_nz_n\notin U$.
\end{proof}

For the bilateral path $X=(X_n)_{n\in\Z}$, let
\[
  \xi^+=\bnd^+(X), \quad \xi^-=\bnd^-(X).
\]
By construction, conditionally on $\omega$, the limit points $\xi^+$ and $\xi^-$ are independent and both have law $\nu_\omega^p$. Thus Lemma~\ref{lem:harmonic-nonatomic} gives $\xi^+\ne\xi^-$ almost surely.

For $n\in\Z$, we set
\[
  \omega_n=X_n^{-1}\omega, \,\, \rho_n=X_n^{-1}\xi^-.
\]
Thus $\rho_n$ is the backward limit viewed from $X_n$, and $X_n\rho_n=\xi^-$. By $T$-invariance, for every $n\in\Z$ the joint law of $(\omega_n,\rho_n)$ is
\begin{equation}\label{eq:repeller-law}
  \whmu_p(\dd\omega)\nu_\omega^p(\dd\zeta).
\end{equation}

To apply Lemma~\ref{lem:moving-repeller} along the walk, we control the mass that $\eta_{\omega_n}$ assigns near $\rho_n$ for an arbitrary stationary family $\eta$. By \eqref{eq:repeller-law}, the expected mass of $B_{d_{\partial\Gamma}}(\rho_n,r)$ is independent of $n$. Non-atomicity of harmonic measure implies that this expectation tends to zero as $r\downarrow0$.

\begin{lemma}\label{lem:small-mass-repeller}
Let $\eta\in\mathcal S_p(\partial\Gamma)$ and set
\[
  \alpha_\eta(r) =\int_{\Om_p^\infty}\int_{\partial\Gamma} \eta_\omega\bigl(B_{d_{\partial\Gamma}}(\zeta,r)\bigr) \dd\nu_\omega^p(\zeta)\dd\whmu_p(\omega).
\]
Then $\alpha_\eta(r)\to0$ as $r\downarrow0$, and
\begin{equation}\label{eq:small-mass-uniform}
  \mathbb E_{\Lambda_p}\!\left[\eta_{\omega_n}\bigl(B_{d_{\partial\Gamma}}(\rho_n,r)\bigr)\right]=\alpha_\eta(r)
\end{equation}
for every $n\in\Z$ and $r>0$.
\end{lemma}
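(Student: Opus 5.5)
The identity \eqref{eq:small-mass-uniform} comes directly from \eqref{eq:repeller-law}. The limit $\alpha_\eta(r)\to0$ comes from dominated convergence together with non-atomicity. Note that the lemma uses nothing about $\eta$ beyond measurability of $\omega\mapsto\eta_\omega$. Stationarity plays no role here.

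\emph{The identity.} Fix $n\in\Z$ and $r>0$. Consider
\[
  \Phi_r(\omega,\zeta)=\eta_\omega\bigl(B_{d_{\partial\Gamma}}(\zeta,r)\bigr).
\]
It is jointly Borel in $(\omega,\zeta)$ and bounded by $1$. To see this, write it as $\int \1_{\{d_{\partial\Gamma}(\zeta,\xi)<r\}}\dd\eta_\omega(\xi)$: the integrand is Borel on $\partial\Gamma\times\partial\Gamma$, and $\omega\mapsto\eta_\omega$ is a measurable kernel, so a monotone class argument applies. The expectation in \eqref{eq:small-mass-uniform} equals $\mathbb E_{\Lambda_p}[\Phi_r(\omega_n,\rho_n)]$. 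By \eqref{eq:repeller-law}, $(\omega_n,\rho_n)$ has law $\whmu_p(\dd\omega)\nu_\omega^p(\dd\zeta)$, so this expectation is $\int\int\Phi_r\dd\nu_\omega^p\dd\whmu_p$. Since $\whmu_p$ is supported on $\Om_p^\infty$, this is exactly $\alpha_\eta(r)$.

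If needed, I would justify \eqref{eq:repeller-law} as follows. At $n=0$ we have $(\omega_0,\rho_0)=(\omega,\xi^-)$, and conditionally on $\omega$ the point $\xi^-$ has law $\nu_\omega^p$. One application of $T$ sends $(\omega_n,\rho_n)$ to $(\omega_{n+1},\rho_{n+1})$, because $X_1^{-1}\xi^-$ is the backward limit of the shifted path. $T$-invariance of $\Lambda_p$ then handles all $n\in\Z$.

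\emph{The limit.} As $r\downarrow0$, the balls $B_{d_{\partial\Gamma}}(\zeta,r)$ decrease to $\{\zeta\}$. By continuity from above, $\Phi_r(\omega,\zeta)\downarrow\eta_\omega(\{\zeta\})$ pointwise, and since $\Phi_r\le1$, dominated convergence gives
\[
  \alpha_\eta(r)\to\int_{\Om_p^\infty}\int_{\partial\Gamma}\eta_\omega(\{\zeta\})\dd\nu_\omega^p(\zeta)\dd\whmu_p(\omega).
\]
It remains to show that the inner integral vanishes for $\whmu_p$-a.e. $\omega$. Fix $\omega$ such that $\nu_\omega^p$ is non-atomic; by Lemma~\ref{lem:harmonic-nonatomic}, this holds for $\mu_p^\infty$-a.e. $\omega$, hence for $\whmu_p$-a.e. $\omega$. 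The probability measure $\eta_\omega$ has at most countably many atoms, and $\eta_\omega(\{\zeta\})=0$ off this countable set. Each atom has $\nu_\omega^p$-measure zero, so the countable set is $\nu_\omega^p$-null. Hence $\int\eta_\omega(\{\zeta\})\dd\nu_\omega^p(\zeta)=0$, as required. Equivalently, by Fubini this integral is the $\eta_\omega\otimes\nu_\omega^p$-measure of the diagonal, which is zero because one factor is non-atomic.

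\emph{Main obstacle.} The only delicate point is measurability: that $\Phi_r$ is jointly measurable and that the diagonal is measurable in $\partial\Gamma\times\partial\Gamma$. Both are standard because $\partial\Gamma$ is compact metrizable.
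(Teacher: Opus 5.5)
Your proposal is correct and matches the paper's proof: the identity comes from the law \eqref{eq:repeller-law} of $(\omega_n,\rho_n)$, and the limit comes from the balls decreasing to $\{\zeta\}$, dominated convergence, and the fact that the countably many atoms of $\eta_\omega$ form a $\nu_\omega^p$-null set because $\nu_\omega^p$ is non-atomic. Your verification of \eqref{eq:repeller-law} via $(\omega_n,\rho_n)=(\omega_0,\rho_0)\circ T^n$, and your remark that stationarity of $\eta$ is never used, are both correct additions.
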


\begin{proof}
For each $\omega$, the set of atoms of $\eta_\omega$ is countable, and Lemma~\ref{lem:harmonic-nonatomic} implies
\[
  \eta_\omega(\{\zeta\})=0
\]
for $\whmu_p(\dd\omega)\nu_\omega^p(\dd\zeta)$-almost every pair $(\omega,\zeta)$. Since
\[
  \eta_\omega\bigl(B_{d_{\partial\Gamma}}(\zeta,r)\bigr) \downarrow\eta_\omega(\{\zeta\}),
\]
dominated convergence gives $\alpha_\eta(r)\to0$. The identity \eqref{eq:small-mass-uniform} follows from \eqref{eq:repeller-law}.
\end{proof}

\begin{lemma}\label{lem:stationary-boundary-contraction}
Let $\eta\in\mathcal S_p(\partial\Gamma)$. Then, for $\mu_p^\infty$-almost every $\omega$ and $P_p^\omega$-almost every path $(X_n)_{n\ge0}$,
\[
X_n\eta_{X_n^{-1}\omega}\to\delta_{\bnd^+(X)}
\]
weakly on $\partial\Gamma$.
\end{lemma}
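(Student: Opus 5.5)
Work on the bilateral space $(\Om\times\G^\Z,\Lambda_p)$ and pass back to $\mu_p^\infty$ at the end. Since the forward half-path has quenched law $P_p^\omega$ and $\whmu_p$ is equivalent to $\mu_p^\infty$, it suffices to prove the statement $\Lambda_p$-almost surely. Write $\lambda_n=X_n\eta_{\omega_n}$ with $\omega_n=X_n^{-1}\omega$, and $\xi^\pm=\bnd^\pm(X)$.

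The first step is to show that $\lambda_n$ converges almost surely. For $\phi\in\mathcal D$, set $M_n(\phi)=\int\phi\,\dd\lambda_n$. By the stationarity identity \eqref{eq:stationarity-pointwise} applied at $\omega_n$, together with the Markov property,
\[
  \mathbb E\bigl[M_{n+1}(\phi)\bigm|\omega,X_0,\dots,X_n\bigr]=\sum_{s}q_p(\omega_n,s)\int\phi\,\dd(X_ns\eta_{s^{-1}\omega_n})=M_n(\phi).
\]
So $(M_n(\phi))_{n\ge0}$ is a bounded martingale under $P_p^\omega$ for $\mu_p^\infty$-a.e.\ $\omega$. It converges almost surely for every $\phi$ in the countable set $\mathcal D$, and hence $\lambda_n$ converges weakly to some random limit $\lambda_\infty$, almost surely.

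The second step identifies the limit by contraction along a subsequence. Fix $\eps>0$ and a neighborhood $U$ of $\xi^+$. By Lemma~\ref{lem:small-mass-repeller}, choose $r$ with $\alpha_\eta(r)<\eps^2$. Then Markov's inequality and \eqref{eq:small-mass-uniform} give
\[
  \Lambda_p\bigl(\eta_{\omega_n}(B_{d_{\partial\Gamma}}(\rho_n,r))>\eps\bigr)<\eps
\]
for every $n$. By reverse Fatou, with probability at least $1-\eps$ this event fails for infinitely many $n$.

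Along such $n$, apply Lemma~\ref{lem:moving-repeller} with $g_n=X_n$, $\alpha=\xi^+$, $\gamma=\xi^-$ and $b_n=\rho_n$. The hypotheses hold: $X_n o\to\xi^+$, we have $X_n\rho_n=\xi^-$, and $\xi^+\ne\xi^-$ almost surely by Lemma~\ref{lem:harmonic-nonatomic}. The lemma is deterministic and can be applied along any subsequence, and Proposition~\ref{prop:boundary-contraction} only requires $g_n^{-1}o$ to converge in the boundary after passing to a further subsequence, which the proof of Lemma~\ref{lem:moving-repeller} already handles. It gives
\[
  X_n\bigl(\partial\Gamma\setminus B(\rho_n,r)\bigr)\subset U
\]
for large $n$. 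Hence $\lambda_n(U)\ge 1-\eps$ along infinitely many $n$.

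Since $\lambda_n\to\lambda_\infty$, the Portmanteau theorem with closed neighborhoods gives $\lambda_\infty(\overline U)\ge1-\eps$ on an event of probability at least $1-\eps$. Let $U$ shrink along a countable neighborhood basis of $\xi^+$. To make this uniform over the random point $\xi^+$, take $U$ to be metric balls $B(\xi^+,1/k)$; Lemma~\ref{lem:moving-repeller} gives the inclusion for each fixed realization. Then let $\eps\to0$ along a countable sequence. This yields $\lambda_\infty=\delta_{\xi^+}$ almost surely.

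The main obstacle is this second step: the mass bound near the moving repeller $\rho_n$ holds only in expectation, not along the whole sequence. That is why the almost sure convergence from the martingale step is needed, so that good behaviour along a random subsequence identifies the full limit. One must also check that the exceptional events are countable in number and that the random neighborhoods $U$ are handled measurably, by fixing the balls $B(\xi^+,1/k)$ before applying Fatou.
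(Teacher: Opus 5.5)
Your proof is correct and uses the same ingredients as the paper: the bounded martingale $M_n(f)$, Lemma~\ref{lem:moving-repeller} applied along the whole walk with $g_n=X_n$, $b_n=\rho_n$, $\gamma=\xi^-$, and the $n$-independent expectation bound from Lemma~\ref{lem:small-mass-repeller}. The difference lies only in how the limit is identified. The paper bounds $\mathbb E_{\Lambda_p}|M_n(f)-f(\xi^+)|$ by $\varepsilon+2\|f\|_\infty\alpha_\eta(r)$ plus $2\|f\|_\infty$ times the probability that the contraction has not yet taken effect, which gives $L^1$ convergence directly; it then matches this with the almost sure martingale limit. You instead take the almost sure limit first and identify it via Markov's inequality, reverse Fatou and Portmanteau along random good times, which is equally valid.
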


\begin{proof}
Let $f\in\mathcal D$. We prove that
\begin{equation}\label{eq:martingale-target}
  \int_{\partial\Gamma}f(X_n\zeta)\dd\eta_{X_n^{-1}\omega}(\zeta) \to f(\xi^+)
\end{equation}
in $L^1(\Lambda_p)$ and almost surely. Since $\omega_n$ has law $\whmu_p$ for every $n\ge0$, the stationarity equation \eqref{eq:stationarity-pointwise} holds almost surely at every $\omega_n$.

We set
\[
  M_n(f)=\int_{\partial\Gamma}f(X_n\zeta)\dd\eta_{\omega_n}(\zeta).
\]
The process $(M_n(f))_{n\ge0}$ is a bounded martingale with respect to the forward filtration $\mathcal F_n^+=\sigma(\omega,X_0,\ldots,X_n)$. Indeed, conditional on $\mathcal F_n^+$, the next position is $X_ns$ with probability $q_p(\omega_n,s)$, and therefore
\begin{align*}
  \mathbb E_{\Lambda_p}[M_{n+1}(f)\mid\mathcal F_n^+] &=\sum_{s\in S}q_p(\omega_n,s) \int f(X_ns\zeta)\dd\eta_{s^{-1}\omega_n}(\zeta)\\
  &=\int f(X_n\zeta)\dd\eta_{\omega_n}(\zeta) =M_n(f).
\end{align*}
Hence $M_n(f)$ converges almost surely to a bounded random variable $M_\infty(f)$.

We now show that $M_\infty(f)=f(\xi^+)$ almost surely. We set $L=\|f\|_\infty$ and fix $\varepsilon>0$. For $r>0$, we define
\[
  E_n(r)=\left\{ \sup_{d_{\partial\Gamma}(z,\rho_n)\ge r} |f(X_nz)-f(\xi^+)|\ge\varepsilon \right\}.
\]
Then Lemma~\ref{lem:moving-repeller} gives $\1_{E_n(r)}\to0$ almost surely, and hence $\Lambda_p(E_n(r))\to0$. On $E_n(r)^c$,
\[
  |M_n(f)-f(\xi^+)| \le\varepsilon+2L\eta_{\omega_n}\bigl(B_{d_{\partial\Gamma}}(\rho_n,r)\bigr).
\]
On $E_n(r)$, the difference is at most $2L$. Therefore, taking expectations and using Lemma~\ref{lem:small-mass-repeller} gives
\[
  \mathbb E_{\Lambda_p}|M_n(f)-f(\xi^+)| \le \varepsilon+2L\alpha_\eta(r)+2L\Lambda_p(E_n(r)).
\]
Letting $n\to\infty$, then $r\downarrow0$ and $\varepsilon\downarrow0$, proves that $M_n(f)\to f(\xi^+)$ in $L^1(\Lambda_p)$. Since $M_n(f)$ also converges almost surely to $M_\infty(f)$, the two limits agree almost surely. This proves \eqref{eq:martingale-target}.

Since $\mathcal D$ is countable and dense, we have
\[
  X_n\eta_{\omega_n}\to\delta_{\xi^+}
\]
almost surely under $\Lambda_p$. Disintegrating the forward marginal $\whmu_p(\dd\omega)P_p^\omega(\dd X)$ and using $\whmu_p\sim\mu_p^\infty$ proves the claim.
\end{proof}
The preceding contraction lemma yields uniqueness, as in the classical case of random walks on hyperbolic groups driven by a fixed measure.

\begin{theorem}\label{thm:uniqueness-gromov}
For every $p>p_c$, the harmonic family $\bnu_p$ is the unique element of $\mathcal S_p(\partial\Gamma)$.
\end{theorem}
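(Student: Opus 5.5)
The plan is to deduce uniqueness from Lemma~\ref{lem:stationary-boundary-contraction} via the usual martingale representation of a stationary family. Let $\eta\in\mathcal S_p(\partial\Gamma)$ and fix $f\in\mathcal D$. By condition~(i) of Definition~\ref{def:stationary-family}, $\eta_\omega=\delta_{\zeta_0}=\nu_\omega^p$ for $\mu$-almost every $\omega\notin\Om_p^\infty$. It therefore suffices to show that $\eta_\omega=\nu_\omega^p$ for $\mu_p^\infty$-almost every $\omega$, equivalently for $\whmu_p$-almost every $\omega$.

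Consider the forward martingale
\[
  M_n(f)=\int_{\partial\Gamma}f(X_n\zeta)\dd\eta_{X_n^{-1}\omega}(\zeta)
\]
from the proof of Lemma~\ref{lem:stationary-boundary-contraction}, this time under the quenched law $P_p^\omega$ for a fixed typical $\omega$. The stationarity identity \eqref{eq:stationarity-pointwise} holds at $\omega_n=X_n^{-1}\omega$ for all $n$ almost surely. This is because $\omega_n$ has law $\whmu_p$ under $\Lambda_p$, so the exceptional null set is avoided for $\whmu_p$-almost every starting $\omega$ and $P_p^\omega$-almost every path. Iterating the one-step identity then gives the exact equality
\[
  \int f\dd\eta_\omega=M_0(f)=\mathbb E_{P_p^\omega}[M_n(f)]
\]
for every $n\ge0$.

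Lemma~\ref{lem:stationary-boundary-contraction} gives $M_n(f)\to f(\bnd^+(X))$ for $P_p^\omega$-almost every path. Since $|M_n(f)|\le\|f\|_\infty$, bounded convergence yields
\[
  \int f\dd\eta_\omega=\mathbb E_{P_p^\omega}\bigl[f(\bnd^+(X))\bigr]=\int f\dd\nu_\omega^p.
\]
We intersect the full-measure sets over the countable family $\mathcal D$. Density of $\mathcal D$ in $C(\partial\Gamma)$ then gives $\eta_\omega=\nu_\omega^p$ for $\mu_p^\infty$-almost every $\omega$. Combining this with the agreement off $\Om_p^\infty$ gives $\eta=\bnu_p$. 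Existence holds because $\bnu_p\in\mathcal S_p(\partial\Gamma)$, as noted after \eqref{eq:harmonic-stationarity}.

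The only delicate point is the measure-theoretic bookkeeping: the martingale identity must hold at every step along almost every path, for almost every fixed $\omega$. Stationarity of $\omega_n$ under the degree-biased law handles this, together with the equivalence $\whmu_p\sim\mu_p^\infty$. The real work, namely the contraction of $X_n\eta_{\omega_n}$ to $\delta_{\bnd^+(X)}$, has already been done in Lemma~\ref{lem:stationary-boundary-contraction}.
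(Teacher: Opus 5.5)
Your proposal is correct and follows essentially the same route as the paper: the bounded martingale $M_n(f)=\int f(X_n\zeta)\dd\eta_{\omega_n}(\zeta)$ with $M_0(f)=\int f\dd\eta_\omega$, the limit $M_n(f)\to f(\bnd^+(X))$ from Lemma~\ref{lem:stationary-boundary-contraction}, then the countable dense family $\mathcal D$ and the convention off $\Om_p^\infty$. The only cosmetic difference is that you work under the quenched law, using almost sure convergence plus bounded convergence, whereas the paper conditions $\Lambda_p$ on $\omega$ and uses the $L^1(\Lambda_p)$ convergence established inside that lemma's proof.
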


\begin{proof}
Let $\eta\in\mathcal S_p(\partial\Gamma)$. For $f\in\mathcal D$, the martingale from the proof of Lemma~\ref{lem:stationary-boundary-contraction} satisfies
\[
  M_0(f)=\mathbb E_{\Lambda_p}[M_n(f)\mid\omega].
\]
Since $M_n(f)\to f(\xi^+)$ in $L^1(\Lambda_p)$,
\[
  \int_{\partial\Gamma}f\dd\eta_\omega =\mathbb E_{\Lambda_p}[f(\xi^+)\mid\omega] =\int_{\partial\Gamma}f\dd\nu_\omega^p
\]
for $\whmu_p$-almost every $\omega$. Since $\mathcal D$ is countable and dense, $\eta_\omega=\nu_\omega^p$ for $\mu$-almost every $\omega\in\Om_p^\infty$. Outside $\Om_p^\infty$, both families are $\delta_{\zeta_0}$ by definition.
\end{proof}

\begin{corollary}\label{cor:harmonic-family-continuity}
If $p_n\to p>p_c$, then
\[
  \bnu_{p_n}\to\bnu_p
\]
weakly as probability measures on $\Om\times\partial\Gamma$.
\end{corollary}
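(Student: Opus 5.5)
The plan is the usual compactness-plus-uniqueness argument. I will show that every subsequence of $(\bnu_{p_n})$ has a further subsequence converging weakly to $\bnu_p$. This suffices because $\Prob(\Om\times\partial\Gamma)$ with the weak topology is compact and metrizable, so a sequence all of whose subsequential limits equal $\bnu_p$ converges to $\bnu_p$.

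First, membership. We may assume $p_n>p_c$ for all $n$, since $p>p_c$. By \eqref{eq:harmonic-stationarity} and the extension by $\delta_{\zeta_0}$ outside $\Om_{p_n}^\infty$, each $\bnu_{p_n}$ belongs to $\mathcal S_{p_n}(\partial\Gamma)\subset\mathcal P_\mu(\partial\Gamma)$. The set $\mathcal P_\mu(\partial\Gamma)$ is compact in the weak topology. It is closed in the compact metrizable space $\Prob(\Om\times\partial\Gamma)$, because pushforward under the continuous projection $\pi_\Om$ is weakly continuous.

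Second, the limit. Take any subsequence. By compactness it has a further subsequence $\bnu_{p_{n_k}}\to\eta$ weakly for some $\eta\in\mathcal P_\mu(\partial\Gamma)$. Lemma~\ref{lem:stationary-stability}, applied with $B=\partial\Gamma$ and $\eta_k=\bnu_{p_{n_k}}\in\mathcal S_{p_{n_k}}(\partial\Gamma)$, gives $\eta\in\mathcal S_p(\partial\Gamma)$. Theorem~\ref{thm:uniqueness-gromov} then identifies $\eta=\bnu_p$.

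The substantive ingredients have already been proved, namely the stability lemma (which relies on Lemma~\ref{lem:environment-continuity} and the continuity of $\theta$ at $p$) and uniqueness. The only point needing care is that the convention for $\omega\notin\Om_p^\infty$, namely $\nu_\omega^p=\delta_{\zeta_0}$, matches condition~(i) of Definition~\ref{def:stationary-family} with $\delta_B=\delta_{\zeta_0}$. This holds by construction, so the uniqueness statement applies to the full measure on $\Om\times\partial\Gamma$, not only to its restriction to $\Om_p^\infty$.
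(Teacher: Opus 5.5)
Your proposal is correct and follows the same route as the paper: compactness of $\mathcal P_\mu(\partial\Gamma)$, Lemma~\ref{lem:stationary-stability} to place every subsequential limit in $\mathcal S_p(\partial\Gamma)$, and Theorem~\ref{thm:uniqueness-gromov} to identify that limit as $\bnu_p$. The extra details you supply (discarding finitely many $p_n\le p_c$, closedness of $\mathcal P_\mu(\partial\Gamma)$, and the $\delta_{\zeta_0}$ convention matching condition~(i)) are accurate but are only implicit in the paper's shorter argument.
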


\begin{proof}
Every subsequence has a weakly convergent subsequence in the compact space $\mathcal P_\mu(\partial\Gamma)$. By Lemma~\ref{lem:stationary-stability}, every such limit lies in $\mathcal S_p(\partial\Gamma)$, and by Theorem~\ref{thm:uniqueness-gromov} it must equal $\bnu_p$.
\end{proof}

\section{Continuity of the drift}\label{sec:drift}
In this section, we prove continuity of the drift using a Furstenberg-type formula for stationary families on the horoboundary. Similar formulas in random environments have appeared in \cite{LyonsPemantlePeres1995,CarrascoLessaPaquette2021}.

\begin{theorem}[Furstenberg-type formula]\label{thm:furstenberg}
Let $p>p_c$ and $\eta\in\mathcal S_p(\hbdG)$. Then
\begin{equation}\label{eq:furstenberg}
  \ell(p)=\frac1{Z_p}\sum_{s\in S} \int_{\Om\times\hbdG}a_{p,s}(\omega)\beta(s,\xi)\dd\eta(\omega,\xi).
\end{equation}
\end{theorem}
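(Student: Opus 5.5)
Fix $\eta\in\mathcal S_p(\hbdG)$ and work under the bilateral measure $\Lambda_p$. We enlarge the system by drawing, conditionally on $\omega$, an auxiliary point $\xi\sim\eta_\omega$ and by using the skew product
\[
  \widetilde T(\omega,X,\xi)=\bigl(X_1^{-1}\omega,(X_1^{-1}X_{n+1})_n,X_1^{-1}\xi\bigr)
\]
on the forward part. More simply, we avoid invariance questions and compute directly. Define
\[
  \Phi(\omega,\xi)=\sum_{s\in S}q_p(\omega,s)\beta(s,\xi).
\]
Then the right-hand side of \eqref{eq:furstenberg} equals $\int\Phi\,\dd\whmu_p(\omega)\dd\eta_\omega(\xi)$, by the identity $\whmu_p(\dd\omega)q_p(\omega,s)=Z_p^{-1}a_{p,s}\mu(\dd\omega)$.

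By the cocycle identity,
\[
  \beta(X_n,\xi)=\sum_{k=0}^{n-1}\beta\bigl(X_k^{-1}X_{k+1},X_k^{-1}\xi\bigr).
\]
Take $\xi\sim\eta_\omega$ independent of the walk and set $\xi_k=X_k^{-1}\xi$. Stationarity \eqref{eq:stationarity-pointwise}, iterated, shows that the law of $(\omega_k,\xi_k)$ integrated against the path is compatible, and more precisely that
\[
  \int\!\!\int \beta(X_n,\xi)\,\dd\eta_\omega(\xi)\,\dd P_p^\omega\,\dd\whmu_p(\omega)=n\int\Phi\,\dd\whmu_p\dd\eta_\omega.
\]
The intended approach uses duality in place of a forward Markov structure. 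Since $\eta_\omega=\sum_s q_p(\omega,s)\,s\eta_{s^{-1}\omega}$, integrating $\beta(g,\cdot)$ gives
\[
  \int\beta(X_{k+1}X_k^{-1}\cdots)\,\dd\eta,
\]
and the key manipulation is
\[
  \mathbb E\Bigl[\int\beta(X_1^{-1}X_{2},X_1^{-1}\xi)\,\dd\eta_\omega(\xi)\Bigr].
\]
To evaluate it, we write $\eta_\omega$ through stationarity in terms of $\eta_{\omega_1}$, but the weights $q_p(\omega,s)$ come from the reversed step. Reversibility of $(\omega,s)\mapsto(s^{-1}\omega,s^{-1})$ under $\whmu_p q_p$ then converts the $k$-th summand into $\int\Phi\,\dd\whmu_p\dd\eta$, up to a sign. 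The sign is fixed by the fact that each term is an expectation of the same stationary quantity. We therefore aim to prove, for each $n$,
\[
  \mathbb E_{\whmu_p\otimes P\otimes\eta}\bigl[\beta(X_n^{-1},\xi)\bigr]=n\int\Phi,
\]
with $X_n^{-1}$ arising from the backward half of the bilateral path. Since $X_{-n}$ has law $P_p^\omega$ and is independent of $\xi$, the joint process $(X_{-k}^{-1}\omega,\,X_{-k}^{-1}\xi)$ is a stationary Markov chain, and this is exactly where stationarity of $\eta$ enters.

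Next we divide by $n$ and let $n\to\infty$. Because $\eta_\omega$ is independent of the path given $\omega$, and $\pi_h(\xi)\neq\bnd(X)$ almost surely, Lemma~\ref{lem:horofunction-asymptotics} gives $\beta(X_n,\xi)=|X_n|+O(1)$ almost surely. To obtain $\pi_h(\xi)\neq\bnd(X)$, we use Theorem~\ref{thm:uniqueness-gromov}: the pushforward $(\pi_h)_*\eta$ is a stationary family on $\partial\Gamma$, hence equals $\bnu_p$, which is non-atomic by Lemma~\ref{lem:harmonic-nonatomic}, while the walk's limit is independent of $\xi$. Since $|\beta(X_n,\xi)|\le|X_n|\le n$, dominated convergence together with \eqref{eq:regularity} yields $\frac1n\mathbb E\beta(X_n,\xi)\to\ell(p)$.

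The main obstacle is the first step: proving that the expectation of the cocycle sum is exactly linear in $n$ with slope $\int\Phi$. Stationarity is a backward relation, so forward increments must be matched with it through the reversibility of the environment chain under $\whmu_p$. Here we must be careful that the sample $\xi$ is taken relative to the correct endpoint.
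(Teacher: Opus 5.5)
Your limit step is sound: dominated convergence with $|\beta(X_n,\xi)|\le n$ and Lemma~\ref{lem:horofunction-asymptotics} is what is needed. Identifying $(\pi_h)_*\eta$ with $\bnu_p$ via Theorem~\ref{thm:uniqueness-gromov} is valid but unnecessary, since $\bnd^+(X)$ already has the non-atomic law $\nu_\omega^p$ and is conditionally independent of $\xi$ given $\omega$.

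The genuine gap is the step you yourself call the main obstacle. The exact identity $\mathbb E[\beta(X_n,\xi)]=n\int\Phi\,\dd\whmu_p\dd\eta_\omega$ is the entire content of the formula, and it is asserted but not proved; the sketch you give for it does not work. The remark that the sign is ``fixed because each term is an expectation of the same stationary quantity'' is not an argument.

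Your replacement target $\mathbb E[\beta(X_n^{-1},\xi)]=n\int\Phi$ concerns a different random variable. By the cocycle identity, $\beta(X_n^{-1},\xi)=-\beta(X_n,X_n\xi)$, which does not split along the chain $(X_k^{-1}\omega,X_k^{-1}\xi)$. Already for $n=1$, reversibility turns its expectation into $\int\sum_{s}q_p(\omega,s)\int\xi(s)\dd\eta_{s^{-1}\omega}(\xi)\dd\whmu_p(\omega)$. This has $\eta_{s^{-1}\omega}$ in place of $\eta_\omega$, and nothing in \eqref{eq:stationarity-pointwise} identifies it with $\int\Phi$. Moreover, $X_n^{-1}$ need not converge to the boundary, so your limit step would not apply to it.

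Passing to the backward half changes nothing either. The path $(X_{-k})_{k\ge0}$ has the same conditional law $P_p^\omega$ and is likewise independent of $\xi$. So the chain $(X_{-k}^{-1}\omega,X_{-k}^{-1}\xi)$ is the forward chain in disguise, and its stationarity is exactly what has to be shown.

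The missing computation is short and entirely forward. Set $\omega_k=X_k^{-1}\omega$ and $\xi_k=X_k^{-1}\xi$. For bounded Borel $F$ on $\Om\times\hbdG$,
\[
  \sum_{s\in S}\int q_p(\omega,s)\int F(s^{-1}\omega,s^{-1}\xi)\dd\eta_\omega(\xi)\dd\whmu_p(\omega)
  =\sum_{s\in S}\int q_p(\omega,s)\int F(\omega,s\xi)\dd\eta_{s^{-1}\omega}(\xi)\dd\whmu_p(\omega)
  =\int\!\!\int F\dd\eta_\omega\dd\whmu_p .
\]
The first equality is the invariance of $\whmu_p(\dd\omega)q_p(\omega,s)$ under $(\omega,s)\mapsto(s^{-1}\omega,s^{-1})$, and the second is \eqref{eq:stationarity-pointwise}. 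By induction, $(\omega_k,\xi_k)$ has law $\whmu_p(\dd\omega)\eta_\omega(\dd\xi)$ for every $k$.

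Because $\xi$ is conditionally independent of the walk given $\omega$, the increment $X_k^{-1}X_{k+1}$ has conditional law $q_p(\omega_k,\cdot)$ given $(\omega,\xi,X_0,\dots,X_k)$. So the cocycle term $\beta(X_k^{-1}X_{k+1},\xi_k)$ has conditional mean $\Phi(\omega_k,\xi_k)$. Summing over $k<n$ gives exactly $n\int\Phi$, with the correct sign and no endpoint ambiguity.

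This is the paper's argument. Your instinct that reversibility must be combined with the stationarity relation is right, but without this computation the proposal does not establish the formula.
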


\begin{proof}
We sample $\bigl(\omega,(X_n)_{n\in\Z}\bigr)$ from $\Lambda_p$ and, conditionally on $\omega$, sample $\xi$ from $\eta_\omega$ independently of the path. We set
\[
  \omega_n=X_n^{-1}\omega, \quad \xi_n=X_n^{-1}\xi.
\]

The forward process $(\omega_n,\xi_n)_{n\ge0}$ is stationary. Indeed, for every bounded Borel function $F$ on $\Om\times\hbdG$, we have
\begin{align*}
  \sum_{s\in S} \int_\Om q_p(\omega,s) \int_{\hbdG} F(s^{-1}\omega,s^{-1}\xi) \dd\eta_\omega(\xi) \dd\whmu_p(\omega) &= \sum_{s\in S} \int_\Om q_p(\omega,s) \int_{\hbdG} F(\omega,s\xi) \dd\eta_{s^{-1}\omega}(\xi) \dd\whmu_p(\omega)\\
  &= \int_\Om\int_{\hbdG} F(\omega,\xi) \dd\eta_\omega(\xi) \dd\whmu_p(\omega).
\end{align*}
The first equality follows from reversibility, and the second from \eqref{eq:stationarity-pointwise}.

By the cocycle identity,
\[
  \beta(X_n,\xi) = \sum_{k=1}^n \beta(X_{k-1}^{-1}X_k,\xi_{k-1}).
\]
Conditionally on $(\omega_{k-1},\xi_{k-1})$, the increment $X_{k-1}^{-1}X_k$ equals $s$ with probability $q_p(\omega_{k-1},s)$, and hence
\begin{align}
  \frac1n\mathbb E[\beta(X_n,\xi)] &= \frac1n\sum_{k=1}^n \mathbb E\!\left[ \sum_{s\in S} q_p(\omega_{k-1},s)\beta(s,\xi_{k-1}) \right] \nonumber\\
  &= \int_\Om \sum_{s\in S}q_p(\omega,s) \int_{\hbdG} \beta(s,\xi)\dd\eta_\omega(\xi) \dd\whmu_p(\omega). \label{eq:furstenberg-expectation}
\end{align}

Conditional on $\omega$, the variables $\pi_h(\xi)$ and $\bnd^+(X)$ are independent, and the latter has the non-atomic law $\nu_\omega^p$ by Lemma~\ref{lem:harmonic-nonatomic}. Hence they are almost surely distinct. Lemma~\ref{lem:horofunction-asymptotics} and \eqref{eq:regularity} then give
\[
  \frac1n\beta(X_n,\xi)\to\ell(p)
\]
almost surely. Since
\[
  |\beta(X_n,\xi)|\le |X_n|\le n,
\]
dominated convergence shows that the left-hand side of \eqref{eq:furstenberg-expectation} tends to $\ell(p)$.

Finally, \eqref{eq:muhat} and \eqref{eq:qps} give
\[
  \begin{aligned}
    \int_\Om \sum_{s\in S}q_p(\omega,s) \int_{\hbdG} \beta(s,\xi)\dd\eta_\omega(\xi) \dd\whmu_p(\omega) = \frac1{Z_p} \sum_{s\in S} \int_{\Om\times\hbdG} a_{p,s}(\omega)\beta(s,\xi) \dd\eta(\omega,\xi).
  \end{aligned}
\]
This proves \eqref{eq:furstenberg}.
\end{proof}

The continuity of the drift follows from the above formula and Lemma~\ref{lem:stationary-stability}.

\begin{theorem}\label{thm:drift-continuity}
The function $p\mapsto\ell(p)$ is continuous on $(p_c,1]$.
\end{theorem}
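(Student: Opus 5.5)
Fix $p\in(p_c,1]$ and a sequence $p_n\to p$ with $p_n>p_c$. It suffices to show that every subsequence of $(\ell(p_n))$ has a further subsequence converging to $\ell(p)$. By Proposition~\ref{prop:stationary-existence}, we choose $\eta_n\in\mathcal S_{p_n}(\hbdG)$ for each $n$. Since $\mathcal P_\mu(\hbdG)$ is weakly compact, after passing to a subsequence we may assume $\eta_n\to\eta$ weakly. Lemma~\ref{lem:stationary-stability} then gives $\eta\in\mathcal S_p(\hbdG)$. The proof reduces to passing to the limit in the Furstenberg-type formula \eqref{eq:furstenberg}, applied at $p_n$ with $\eta_n$ and at $p$ with $\eta$.

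By Theorem~\ref{thm:furstenberg},
\[
  \ell(p_n)=\frac1{Z_{p_n}}\sum_{s\in S}\int_{\Om\times\hbdG}a_{p_n,s}(\omega)\beta(s,\xi)\dd\eta_n(\omega,\xi).
\]
By Lemma~\ref{lem:environment-continuity}, $Z_{p_n}\to Z_p>0$ and $a_{p_n,s}\to a_{p,s}$ in $L^1(\mu)$, with all these functions bounded by $1$. For fixed $s$, the function $\xi\mapsto\beta(s,\xi)=\xi(s)$ is continuous on $\hbdG$, because the topology is pointwise convergence. It is also bounded by $|s|=1$, since horofunctions are $1$-Lipschitz and vanish at $o$. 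Hence $F(\omega,\xi)=\beta(s,\xi)$ lies in $C(\Om\times\hbdG)$. Lemma~\ref{lem:weighted-weak-convergence}, applied with $b_n=a_{p_n,s}$ and $b=a_{p,s}$, gives
\[
  \int a_{p_n,s}\beta(s,\cdot)\dd\eta_n\to\int a_{p,s}\beta(s,\cdot)\dd\eta.
\]
Summing over the finitely many $s\in S$ and dividing by $Z_{p_n}\to Z_p$ shows that the right-hand side converges to the formula \eqref{eq:furstenberg} for $(p,\eta)$. By Theorem~\ref{thm:furstenberg} at parameter $p$, this limit equals $\ell(p)$.

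I expect no serious obstacle, since the substantive inputs are already in place: the formula holds for \emph{every} stationary family, and stationarity is stable under limits. The points to check are that $\beta(s,\cdot)$ is genuinely continuous and bounded on $\hbdG$, and that the endpoint $p=1$ needs no special treatment. At $p=1$ only sequences $p_n\uparrow1$ occur, and Lemmas~\ref{lem:environment-continuity} and~\ref{lem:stationary-stability} apply equally there, since $\theta$ is continuous at $1$ and $\mu(U_e=1)=0$.
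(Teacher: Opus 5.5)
Your proposal is correct and matches the paper's proof step for step. You choose $\eta_n\in\mathcal S_{p_n}(\hbdG)$, extract a weakly convergent subsequence by compactness, use Lemma~\ref{lem:stationary-stability} to see that the limit is $p$-stationary, and pass to the limit in the Furstenberg-type formula~\eqref{eq:furstenberg} via Lemma~\ref{lem:weighted-weak-convergence} and $Z_{p_n}\to Z_p$. Your extra checks on $\beta(s,\cdot)$ and on the endpoint $p=1$ are also correct and only make explicit what the paper leaves implicit: the continuity and the bound $|\beta(s,\xi)|\le 1$, and the fact that at $p=1$ only left-continuity of $\theta$ and $\mu(U_e=1)=0$ are needed.
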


\begin{proof}
Let $p_n\to p>p_c$. For each $n$, we choose $\eta_n\in\mathcal S_{p_n}(\hbdG)$. By compactness of $\mathcal P_\mu(\hbdG)$, every subsequence has a further subsequence, still denoted $(\eta_n)_{n\ge1}$, converging weakly to some $\eta$. Lemma~\ref{lem:stationary-stability} gives $\eta\in\mathcal S_p(\hbdG)$.

For fixed $s\in S$, the function $\beta(s,\cdot)$ is continuous and bounded. Since $a_{p_n,s}\to a_{p,s}$ in $L^1(\mu)$, Lemma~\ref{lem:weighted-weak-convergence} gives
\[
  \int a_{p_n,s}(\omega)\beta(s,\xi)\dd\eta_n(\omega,\xi) \to \int a_{p,s}(\omega)\beta(s,\xi)\dd\eta(\omega,\xi).
\]
Since $Z_{p_n}\to Z_p$, the Furstenberg formula at $p_n$ and $p$ gives $\ell(p_n)\to\ell(p)$ along this further subsequence. This proves continuity.
\end{proof}

\section{Continuity of the entropy}\label{sec:entropy}

In this section, we prove continuity of the entropy. Upper semicontinuity follows from continuity and subadditivity of the finite-time entropies. For lower semicontinuity, we establish a random-environment version of the Kaimanovich--Vershik formula \cite{KaimanovichVershik1983}, which expresses the entropy as an average of Kullback--Leibler divergences between hitting measures. The applicability of entropy theory to random walks in stationary random environments was already pointed out by Kaimanovich himself (see \cite{KaimanovichSobieczky2012}). We give a direct proof of the formula in the form needed here. We then combine this formula with continuity of the harmonic family (Corollary~\ref{cor:harmonic-family-continuity}) and lower semicontinuity of Kullback--Leibler divergence (Lemma~\ref{lem:relative-entropy-lsc}).

\subsection{Upper semicontinuity}
For a discrete random variable $Y$ under a probability measure $Q$, we write
\[
  H_Q(Y) = -\sum_y Q(Y=y)\log Q(Y=y)
\]
for its Shannon entropy. We set
\[
  H_n^\omega(p)=H_{P_p^\omega}(X_n), \quad H_n(p)=\int_\Om H_n^\omega(p)\dd\whmu_p(\omega).
\]
By Proposition~\ref{prop:bilateral},
\[
  -\frac1n\log P_p^\omega(X_n=x_n)\to h(p)
\]
for $\Lambda_p$-almost every $(\omega,x)$. Since every possible value of $X_n$ has quenched probability at least $|S|^{-n}$,
\[
  0\le -\frac1n\log P_p^\omega(X_n=x_n) \le \log|S|.
\]
Dominated convergence therefore gives
\[
  \frac{H_n(p)}{n}\to h(p).
\]
Moreover, the chain rule and stationarity of the environment process give
\[
  H_{m+n}(p)\le H_m(p)+H_n(p).
\]
Hence
\[
  h(p)=\inf_{n\ge1}\frac1nH_n(p).
\]

\begin{proposition}[Upper semicontinuity]\label{prop:entropy-usc}
If $p_n\to p>p_c$, then
\[
  \limsup_{n\to\infty}h(p_n)\le h(p).
\]
\end{proposition}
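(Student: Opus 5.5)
Since $h(q)=\inf_{k\ge1}\frac1kH_k(q)$ for every $q>p_c$, it suffices to prove that for each fixed $k\ge1$ the finite-time entropy $q\mapsto H_k(q)$ is continuous at $p$. Indeed, granting this, for every $k$ we get
\[
  \limsup_{n\to\infty}h(p_n)\le\limsup_{n\to\infty}\frac1kH_k(p_n)=\frac1kH_k(p),
\]
and taking the infimum over $k$ gives $\limsup_n h(p_n)\le h(p)$. This is the standard principle that an infimum of continuous functions is upper semicontinuous.

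To prove continuity of $H_k$ at $p$, write
\[
  H_k(q)=\frac1{Z_q}\int_\Om \deg_q(\omega)\1_{\Om_q^\infty}(\omega)H_k^\omega(q)\dd\mu(\omega),
\]
where $H_k^\omega(q)$ is extended by $0$ off $\Om_q^\infty$. The quenched law of $X_k$ under $P_q^\omega$ is a finite sum over words $s_1\cdots s_k\in S^k$ of products $\prod_{j}q_q(\omega_{j-1},s_j)$, with $\omega_{j-1}=(s_1\cdots s_{j-1})^{-1}\omega$. Each factor depends only on the statuses $\1_{\{U_e\le q\}}$ of the finitely many edges within distance $k$ of $o$. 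For $\mu$-almost every $\omega$, none of these finitely many edges has $U_e=p$, so for $q$ close to $p$ all these statuses, and hence all the transition probabilities $q_q(x^{-1}\omega,s)$ with $|x|<k$, coincide with those at $p$. Consequently the law of $X_k$ under $P_{p_n}^\omega$ equals that under $P_p^\omega$ for all large $n$, and $H_k^\omega(p_n)\to H_k^\omega(p)$ $\mu$-almost surely on $\Om_p^\infty$.

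For the factor $\deg_{p_n}\1_{\Om_{p_n}^\infty}$, Lemma~\ref{lem:environment-continuity} gives $\deg_{p_n}\1_{\Om_{p_n}^\infty}\to\deg_p\1_{\Om_p^\infty}$ almost surely. Off $\Om_p^\infty$ the product therefore tends to $0$ regardless of $H_k^\omega$. The integrands are bounded by $|S|\,k\log|S|$, so dominated convergence applies, and $Z_{p_n}\to Z_p>0$ by the same lemma. Together these give $H_k(p_n)\to H_k(p)$.

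The only delicate point is that $C_q(\omega)$ itself is not a local function of $\omega$. However, within $k$ steps the walk only sees the local edge statuses, and the conditioning on an infinite cluster enters only through the indicator $\1_{\Om_q^\infty}$, whose almost sure convergence is exactly what Lemma~\ref{lem:environment-continuity} provides via continuity of $\theta$. Apart from this, the argument reduces to the elementary observation that a pointwise infimum of continuous functions is upper semicontinuous.
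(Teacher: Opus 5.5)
Your proof is correct and follows the paper's argument. You reduce to continuity of the finite-time entropies $H_k$ via $h=\inf_k H_k/k$, then combine locality of the $k$-step quenched law with Lemma~\ref{lem:environment-continuity} and dominated convergence. The only differences are cosmetic: the paper extends the walk lazily to all $\omega$ and splits $H_k(p_n)-H_k(p)$ into a total-variation term for $\whmu_{p_n}-\whmu_p$ and a dominated-convergence term, whereas you integrate against $\mu$ with the explicit density $\deg_q\1_{\Om_q^\infty}/Z_q$ and apply dominated convergence to the product in one step.
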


\begin{proof}
We extend the transition rule to every $\omega\in\Om$ by making the walk stay at its current vertex when no incident edge is open. We denote by $\widetilde H_m^\omega(q)$ the entropy of its position at time $m$. On $\Om_q^\infty$ this is $H_m^\omega(q)$, and
\[
  0 \le \widetilde H_m^\omega(q) \le m\log(|S|+1).
\]
For fixed $m$, $\widetilde H_m^\omega(q)$ depends only on the labels of finitely many edges near the root. Hence, if $p_n\to p$, then
\[
  \widetilde H_m^\omega(p_n) \to \widetilde H_m^\omega(p)
\]
for every $\omega$ outside the null set on which one of these labels equals $p$.

We write
\begin{align*}
  H_m(p_n)-H_m(p) &= \int_\Om \widetilde H_m^\omega(p_n)\dd(\whmu_{p_n}-\whmu_p)(\omega)\\
  &\quad+\int_\Om\bigl(\widetilde H_m^\omega(p_n)-\widetilde H_m^\omega(p)\bigr)\dd\whmu_p(\omega).
\end{align*}
The first term tends to zero by Lemma~\ref{lem:environment-continuity} and the uniform bound above, and the second by dominated convergence. Thus, for every $m\ge1$,
\[
  \limsup_{n\to\infty}h(p_n) \le \lim_{n\to\infty}\frac1mH_m(p_n) = \frac1mH_m(p).
\]
Taking the infimum over $m$ proves the claim.
\end{proof}

\subsection{Kaimanovich--Vershik formula in random environments}
We use the following convention for Kullback--Leibler divergence.

\begin{definition}
For finite Borel measures $\rho$ and $\sigma$ of equal mass, the \emph{Kullback--Leibler divergence} $D_{\mathrm{KL}}(\rho\,\|\,\sigma)$ is defined by
\[
  D_{\mathrm{KL}}(\rho\,\|\,\sigma) = \int \log\!\left(\frac{\dd\rho}{\dd\sigma}\right) \dd\rho
\]
when $\rho\ll\sigma$, and we set $D_{\mathrm{KL}}(\rho\|\sigma)=+\infty$ otherwise.
\end{definition}
Next, we introduce the conditional entropy with respect to the Gromov boundary. Let
\[
  P_p^\omega = \int_{\partial\Gamma} P_p^{\omega,\zeta}\dd\nu_\omega^p(\zeta)
\]
be the disintegration of the quenched path law with respect to the boundary map. Disintegrating the annealed law $\whmu_p(\dd\omega)P_p^\omega(\dd X)$ with respect to the map $(\omega,X)\mapsto\bigl(\omega,\bnd^+(X)\bigr)$ gives conditional laws $P_p^{\omega,\zeta}$ that are jointly measurable in $(\omega,\zeta)$.

We define the quenched and annealed conditional entropies by
\[
  H_n^{\omega,\partial\Gamma}(p) = \int_{\partial\Gamma} H_{P_p^{\omega,\zeta}}(X_n) \dd\nu_\omega^p(\zeta), \,\, H_n^{\partial\Gamma}(p) = \int_\Om H_n^{\omega,\partial\Gamma}(p) \dd\whmu_p(\omega).
\]

For $\whmu_p(\dd\omega)\nu_\omega^p(\dd\zeta)$-almost every $(\omega,\zeta)$, we set
\[
  q_p^\zeta(\omega,s) = P_p^{\omega,\zeta}(X_1=s).
\]
This is the one-step transition probability of the conditional walk. Bayes' formula gives
\begin{equation}\label{eq:boundary-bayes}
  q_p^\zeta(\omega,s) = q_p(\omega,s) \frac{\dd\nu_{\omega,s}^p}{\dd\nu_\omega^p}(\zeta)
\end{equation}
for $\whmu_p(\dd\omega)\nu_\omega^p(\dd\zeta)$-almost every $(\omega,\zeta)$ and every $s\in S$ such that $q_p(\omega,s)>0$. Recall that $\nu_{\omega,s}^p$ denotes the boundary law conditioned on the first step $s$.

The difference $H_n(p)-H_n^{\partial\Gamma}(p)$ measures the information that the boundary limit gives about $X_n$, conditional on the environment. The next proposition shows that this averaged quantity is additive in time.

\begin{proposition}
\label{prop:boundary-entropy-decomposition}
For every $n\ge1$,
\begin{equation}\label{eq:boundary-entropy-decomposition}
  H_n(p)-H_n^{\partial\Gamma}(p) = n\bigl( H_1(p)-H_1^{\partial\Gamma}(p) \bigr).
\end{equation}
\end{proposition}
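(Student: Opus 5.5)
We interpret $H_n(p)-H_n^{\partial\Gamma}(p)$ as an averaged conditional mutual information and prove additivity through the chain rule along the path. Set $I_n^\omega=H_n^\omega(p)-H_n^{\omega,\partial\Gamma}(p)$. Under $P_p^\omega$ this is the mutual information between $X_n$ and $\bnd^+(X)$. The first step is to express $I_n^\omega$ as an integrated log-likelihood ratio. By the Markov property at time $n$ and equivariance, exactly as in the derivation of \eqref{eq:harmonic-stationarity}, the law of $\bnd^+$ given $X_n=x$ is $x\nu^p_{x^{-1}\omega}$. Hence
\[
  I_n^\omega=\sum_x P_p^\omega(X_n=x)\,D_{\mathrm{KL}}\bigl(x\nu^p_{x^{-1}\omega}\,\big\|\,\nu_\omega^p\bigr).
\]
Equivalently, $I_n^\omega=\mathbb E\bigl[\log \frac{\dd x\nu^p_{x^{-1}\omega}}{\dd\nu^p_\omega}(\xi^+)\big|_{x=X_n}\bigr]$. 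Absolute continuity holds because $\nu_\omega^p=\sum_x P(X_n=x)x\nu^p_{x^{-1}\omega}$. To justify the identity $H_{P}(X_n)-\int H_{P^{\omega,\zeta}}(X_n)\dd\nu(\zeta)=$ mutual information, we use Bayes' formula in the form of \eqref{eq:boundary-bayes} at time $n$: $P_p^{\omega,\zeta}(X_n=x)=P_p^\omega(X_n=x)\frac{\dd x\nu^p_{x^{-1}\omega}}{\dd\nu^p_\omega}(\zeta)$. This identity is standard for countably valued $X_n$ with finite entropy.

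The second step is a telescoping decomposition of the density. Since $\nu_\omega^p$ is a.s.\ non-atomic and quasi-invariance follows from stationarity, we write
\[
  \log\frac{\dd X_n\nu^p_{\omega_n}}{\dd\nu^p_\omega}(\xi^+)=\sum_{k=1}^n\log\frac{\dd X_k\nu^p_{\omega_k}}{\dd X_{k-1}\nu^p_{\omega_{k-1}}}(\xi^+),
\]
where $\omega_k=X_k^{-1}\omega$. Pulling back by $X_{k-1}$ turns the $k$-th summand into
\[
  \log\frac{\dd s\nu^p_{s^{-1}\omega_{k-1}}}{\dd\nu^p_{\omega_{k-1}}}\bigl(X_{k-1}^{-1}\xi^+\bigr),\qquad s=X_{k-1}^{-1}X_k .
\]
This is the density appearing in \eqref{eq:boundary-bayes}, evaluated at the shifted environment and the shifted boundary point. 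The triple $(\omega_{k-1},X_{k-1}^{-1}X_k,X_{k-1}^{-1}\xi^+)$ is the image of $(\omega,X_1,\xi^+)$ under $T^{k-1}$. By $T$-invariance of $\Lambda_p$ from Proposition~\ref{prop:bilateral}, each summand therefore has the same $\Lambda_p$-expectation as the $k=1$ term.

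The third step is to average. Taking $\whmu_p$-expectations gives $H_n(p)-H_n^{\partial\Gamma}(p)=\mathbb E_{\Lambda_p}[\text{sum}]=n\,\mathbb E[\text{first term}]=n(H_1(p)-H_1^{\partial\Gamma}(p))$. The main obstacle is integrability, which is needed to split the expectation of the sum and to identify the expectation of each summand with a nonnegative KL quantity. Each summand's conditional expectation given $\mathcal F_{k-1}^+$ is a one-step mutual information, bounded by $H_1\le\log|S|$. The quantities therefore decompose as nonnegative conditional KL divergences. We first apply the chain rule for mutual information, $I(X_1,\dots,X_n;\xi^+)=\sum_k I(X_k;\xi^+\mid X_{<k})$, which involves only finite sums of nonnegative terms. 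We then use the Markov property to reduce $I(X_n;\xi^+)$ to $I(X_1,\dots,X_n;\xi^+)$, since $\xi^+$ depends on the past only through $X_n$ and the environment. Each conditional term is then identified, via stationarity of $(\omega_k)$ under $\whmu_p$, with $H_1(p)-H_1^{\partial\Gamma}(p)$. This avoids any $L^1$ issue with the log-densities.
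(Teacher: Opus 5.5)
Your proposal is correct. The argument you actually rely on in the third step is the paper's proof: the chain rule for the mutual information between the path and $\bnd^+(X)$ given $\omega$, the Markov property to pass from $X_n$ to $(X_1,\dots,X_n)$, and stationarity of the rerooted environment to identify each summand with the one-step quantity. The paper writes $H_n(p)-H_n^{\partial\Gamma}(p)=H(S_1,\dots,S_n\mid\omega)-H(S_1,\dots,S_n\mid\omega,\zeta)$ and expands both entropies by the chain rule, which amounts to the same thing.

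The Radon--Nikodym telescoping of your first two steps is therefore not needed. It could also be made rigorous directly: stationarity gives $q_p(\omega,s)\,\frac{\dd s\nu^p_{s^{-1}\omega}}{\dd\nu^p_\omega}\le 1$. Hence the log-densities have positive part at most $\log|S|$ and negative part with conditional expectation at most $e^{-1}$.
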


\begin{proof}
All entropies are taken under $\Lambda_p$, whose forward marginal is $\whmu_p(\dd\omega)P_p^\omega(\dd X)$. We set
\[
  S_k=X_{k-1}^{-1}X_k, \quad \omega_k=X_k^{-1}\omega, \quad \mathcal F_k=\sigma(\omega,S_1,\ldots,S_k), \quad \zeta=\bnd^+(X).
\]

Since $X_n$ is determined by $(S_1,\ldots,S_n)$, the chain rule gives
\begin{align*}
  H(S_1,\ldots,S_n\mid\omega) &= H(X_n\mid\omega) + H(S_1,\ldots,S_n\mid\omega,X_n),\\
  H(S_1,\ldots,S_n\mid\omega,\zeta) &= H(X_n\mid\omega,\zeta) + H(S_1,\ldots,S_n\mid\omega,X_n,\zeta).
\end{align*}
Conditionally on $(\omega,X_n)$, the future from time $n$ is independent of $(S_1,\ldots,S_n)$, while $\zeta$ is determined by this future. Hence
\[
  H(S_1,\ldots,S_n\mid\omega,X_n,\zeta) = H(S_1,\ldots,S_n\mid\omega,X_n).
\]
Therefore
\begin{equation}\label{eq:entropy-difference}
  H_n(p)-H_n^{\partial\Gamma}(p) = H(S_1,\ldots,S_n\mid\omega) - H(S_1,\ldots,S_n\mid\omega,\zeta).
\end{equation}

For the first term, the chain rule and the Markov property give
\begin{align*}
  H(S_1,\ldots,S_n\mid\omega) &= \sum_{k=1}^n H(S_k\mid\mathcal F_{k-1})\\
  &= \sum_{k=1}^n \mathbb E\!\left[ -\sum_{s\in S} q_p(\omega_{k-1},s) \log q_p(\omega_{k-1},s) \right].
\end{align*}
Since $\omega_{k-1}$ has law $\whmu_p$ for every $k$,
\[
  H(S_1,\ldots,S_n\mid\omega) = nH_1(p).
\]

For the conditional term, we set $\zeta_k=X_k^{-1}\zeta$. The chain rule gives
\[
  H(S_1,\ldots,S_n\mid\omega,\zeta) = \sum_{k=1}^n H(S_k\mid\mathcal F_{k-1},\zeta).
\]
Since $X_{k-1}$ is $\mathcal F_{k-1}$-measurable,
\[
  \sigma(\mathcal F_{k-1},\zeta) = \sigma(\mathcal F_{k-1},\zeta_{k-1}).
\]
After rerooting at time $k-1$, the conditional law of $S_k$ given $\mathcal F_{k-1}$ and $\zeta$ is
\[
  q_p^{\zeta_{k-1}}(\omega_{k-1},\cdot).
\]
Consequently,
\begin{align*}
  H(S_k\mid\mathcal F_{k-1},\zeta) &= \mathbb E\!\left[ -\sum_{s\in S} q_p^{\zeta_{k-1}}(\omega_{k-1},s) \log q_p^{\zeta_{k-1}}(\omega_{k-1},s) \right].
\end{align*}
By $T$-invariance and equivariance of the boundary map,
\[
  (\omega_{k-1},\zeta_{k-1}) \sim \whmu_p(\dd\omega)\nu_\omega^p(\dd\zeta)
\]
for every $k$. Hence
\begin{align*}
  H(S_k\mid\mathcal F_{k-1},\zeta) &= -\int_\Om\int_{\partial\Gamma} \sum_{s\in S} q_p^\zeta(\omega,s) \log q_p^\zeta(\omega,s) \dd\nu_\omega^p(\zeta) \dd\whmu_p(\omega)\\
  &= H_1^{\partial\Gamma}(p).
\end{align*}
Therefore
\[
  H(S_1,\ldots,S_n\mid\omega,\zeta) = nH_1^{\partial\Gamma}(p).
\]
Substituting these expressions into \eqref{eq:entropy-difference} proves \eqref{eq:boundary-entropy-decomposition}.
\end{proof}

\begin{lemma}
\label{lem:boundary-conditional-entropy-zero}
For every $p>p_c$,
\[
  \lim_{n\to\infty} \frac1nH_{P_p^{\omega,\zeta}}(X_n) = 0
\]
for $\whmu_p(\dd\omega)\nu_\omega^p(\dd\zeta)$-almost every $(\omega,\zeta)$.
\end{lemma}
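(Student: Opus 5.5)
Under the conditional law $P_p^{\omega,\zeta}$ the walk tracks the geodesic ray $\gamma_\zeta$ at speed $\ell(p)$, so $X_n$ is essentially confined to a ball of radius $o(n)$ around $\gamma_\zeta(\ell(p)n)$. Since $\Gamma$ has at most exponential growth, such a ball has cardinality $e^{o(n)}$. The natural strategy is therefore the standard ``ray approximation'' argument of Kaimanovich (\cite[Theorem~7.2]{Kaimanovich2000} gives the tracking, cf.\ \eqref{eq:tracking}), carried out in the quenched conditional setting. The plan is to bound the Shannon entropy of $X_n$ by splitting according to whether $X_n$ lies in the ball $B_n=B\bigl(\gamma_\zeta(\ell(p)n),\eps n\bigr)$. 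Note that $\gamma_\zeta$ is determined by the conditioning point $\zeta$, so $B_n$ is a deterministic set under $P_p^{\omega,\zeta}$.

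\textbf{Step 1: tracking under the conditional law.} By \eqref{eq:tracking}, for $\whmu_p$-a.e.\ $\omega$ and $P_p^\omega$-a.e.\ path, $d(X_n,\gamma_{\bnd^+(X)}(\ell(p)n))/n\to0$. Disintegrate this almost sure statement along $\bnd^+$. For $\whmu_p(\dd\omega)\nu_\omega^p(\dd\zeta)$-a.e.\ $(\omega,\zeta)$, the measure $P_p^{\omega,\zeta}$ is carried by paths with $\bnd^+(X)=\zeta$ satisfying the tracking property. Hence $\pi_n:=P_p^{\omega,\zeta}(X_n\notin B_n)\to0$ for every $\eps>0$. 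This uses that the disintegration is jointly measurable, as stated before the lemma, so a null set for the annealed law is null for a.e.\ conditional law.

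\textbf{Step 2: entropy bound.} Let $A=\1_{\{X_n\in B_n\}}$. The chain rule gives
\[
H(X_n)\le H(A)+\pi_nH(X_n\mid A=0)+(1-\pi_n)H(X_n\mid A=1).
\]
Here $H(A)\le\log2$, and $H(X_n\mid A=1)\le\log|B_n|\le \eps n\log|S|+1$. Since $X_n$ lies in the ball of radius $n$, $H(X_n\mid A=0)\le n\log|S|+1$. Dividing by $n$ yields
\[
\limsup_n \tfrac1n H_{P_p^{\omega,\zeta}}(X_n)\le \eps\log|S|.
\]
Letting $\eps\downarrow0$ along a countable sequence proves the claim.

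The main obstacle is Step 1: ensuring that the conditional measures really concentrate on tracking paths for almost every $(\omega,\zeta)$, rather than only on average. This follows because the good event $\{\bnd^+(X)=\zeta'\text{ and tracking holds}\}$ has full annealed measure, and $\bnd^+=\zeta$ holds $P_p^{\omega,\zeta}$-a.s.\ for a.e.\ $(\omega,\zeta)$ by the defining property of the disintegration. Everything else is routine counting.
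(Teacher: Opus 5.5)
Your proposal is correct and follows essentially the same route as the paper. The paper likewise uses sublinear tracking to show that under $P_p^{\omega,\zeta}$ the position $X_n$ concentrates on the ball of radius $rn$ around $\gamma_\zeta(\ell(p)n)$, then splits the entropy according to membership in this ball and uses bounded degree to get $\limsup_n \frac1n H_{P_p^{\omega,\zeta}}(X_n)\le r\log|S|$. Your Step~1, which disintegrates along $(\omega,X)\mapsto(\omega,\bnd^+(X))$, simply spells out what the paper compresses into ``by sublinear tracking''.
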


\begin{proof}
By sublinear tracking, for $\whmu_p(\dd\omega)\nu_\omega^p(\dd\zeta)$-almost every $(\omega,\zeta)$ and every $r>0$,
\[
  \delta_n = P_p^{\omega,\zeta}\!\left( d\bigl(X_n,\gamma_\zeta(\ell(p)n)\bigr)>rn \right) \to0.
\]
Since $G$ has bounded degree,
\[
  \log \left| B_G\bigl(\gamma_\zeta(\ell(p)n),rn\bigr) \right| \le rn\log|S|+O(1),
\]
while $X_n$ has at most $|S|^n$ possible values. Splitting according to whether $X_n$ lies in this ball gives
\[
  H_{P_p^{\omega,\zeta}}(X_n) \le rn\log|S| +\delta_n n\log|S| +O(1).
\]
Therefore
\[
  \limsup_{n\to\infty} \frac1n H_{P_p^{\omega,\zeta}}(X_n) \le r\log|S|.
\]
Letting $r\downarrow0$ proves the claim.
\end{proof}

\begin{theorem}[Kaimanovich--Vershik formula]
\label{thm:kv}
For every $p>p_c$,
\begin{equation}\label{eq:kv}
  h(p) = \int_\Om \sum_{s:\,q_p(\omega,s)>0} q_p(\omega,s) D_{\mathrm{KL}}\!\left( s\nu_{s^{-1}\omega}^p \,\middle\|\, \nu_\omega^p \right) \dd\whmu_p(\omega).
\end{equation}
\end{theorem}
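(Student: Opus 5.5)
The identity \eqref{eq:kv} will follow by combining Proposition~\ref{prop:boundary-entropy-decomposition} with Lemma~\ref{lem:boundary-conditional-entropy-zero}, and then computing the one-step information $H_1(p)-H_1^{\partial\Gamma}(p)$ via the Bayes formula \eqref{eq:boundary-bayes}.

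\emph{Step 1: the conditional entropy is sublinear on average.} I would first show $H_n^{\partial\Gamma}(p)/n\to0$. Lemma~\ref{lem:boundary-conditional-entropy-zero} gives $\frac1n H_{P_p^{\omega,\zeta}}(X_n)\to0$ for $\whmu_p(\dd\omega)\nu^p_\omega(\dd\zeta)$-almost every pair. Since $X_n$ takes at most $|S|^n$ values, we have the domination $0\le \frac1nH_{P_p^{\omega,\zeta}}(X_n)\le\log|S|$. Dominated convergence with respect to $\whmu_p(\dd\omega)\nu_\omega^p(\dd\zeta)$ therefore yields $H_n^{\partial\Gamma}(p)/n\to0$. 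Together with $H_n(p)/n\to h(p)$, established before Proposition~\ref{prop:entropy-usc}, dividing \eqref{eq:boundary-entropy-decomposition} by $n$ and letting $n\to\infty$ gives
\[
  h(p)=H_1(p)-H_1^{\partial\Gamma}(p).
\]

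\emph{Step 2: identify the one-step difference as a KL average.} By \eqref{eq:entropy-difference} with $n=1$, $H_1(p)-H_1^{\partial\Gamma}(p)$ is the conditional mutual information $I(S_1;\zeta\mid\omega)$ under $\whmu_p(\dd\omega)P_p^\omega$. I would write this mutual information as
\[
  \mathbb E\Bigl[\log\frac{q_p^\zeta(\omega,S_1)}{q_p(\omega,S_1)}\Bigr]
  =\mathbb E\Bigl[\log\frac{\dd\nu^p_{\omega,S_1}}{\dd\nu^p_\omega}(\zeta)\Bigr],
\]
using \eqref{eq:boundary-bayes}. Conditioning on $(\omega,S_1=s)$, the point $\zeta$ has law $\nu^p_{\omega,s}=s\nu^p_{s^{-1}\omega}$. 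Hence the inner expectation equals $D_{\mathrm{KL}}(s\nu^p_{s^{-1}\omega}\,\|\,\nu^p_\omega)$, and weighting by $q_p(\omega,s)$ and integrating against $\whmu_p$ gives \eqref{eq:kv}.

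\emph{Main obstacle.} The only delicate point is justifying the rewriting of a difference of (possibly non-summable) entropies as the integrated log-likelihood ratio. Here everything is finite: $S_1$ takes at most $|S|$ values, so $H_1(p)\le\log|S|$, and $H_1^{\partial\Gamma}(p)\ge0$. The chain of equalities should therefore be done at the level of the nonnegative integrand $\sum_s q^\zeta\log(q^\zeta/q)$, integrated first in $\zeta$. Two facts are needed:
\begin{itemize}
  \item Absolute continuity $s\nu^p_{s^{-1}\omega}\ll\nu^p_\omega$ holds, since \eqref{eq:harmonic-stationarity} shows $q_p(\omega,s)\,s\nu^p_{s^{-1}\omega}\le\nu^p_\omega$. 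In particular the Radon--Nikodym derivative is bounded by $1/q_p(\omega,s)$, so each KL term is finite, bounded by $-\log q_p(\omega,s)$.
  \item Using this bound, Fubini applies, and the equality $\sum_s q_p\,D_{\mathrm{KL}}=\int\sum_s q^\zeta\log(q^\zeta/q)\,\dd\nu_\omega^p$ is a direct computation with \eqref{eq:boundary-bayes}.
\end{itemize}
The identity $H(S_1\mid\omega)-H(S_1\mid\omega,\zeta)=\mathbb E[\sum_s q^\zeta\log(q^\zeta/q)]$ then follows by adding and subtracting $\sum_s q^\zeta\log q$ and noting that $\int q^\zeta\,\dd\nu^p_\omega=q$.
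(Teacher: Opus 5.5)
Your proposal is correct and follows the paper's proof essentially step for step: dominated convergence with the bound $n\log|S|$ gives $H_n^{\partial\Gamma}(p)/n\to0$; dividing the additive decomposition by $n$ gives $h(p)=H_1(p)-H_1^{\partial\Gamma}(p)$; and the Bayes formula together with $\int q_p^\zeta(\omega,s)\,\dd\nu_\omega^p(\zeta)=q_p(\omega,s)$ rewrites this as the averaged Kullback--Leibler divergence. Your added observation that $q_p(\omega,s)\,s\nu^p_{s^{-1}\omega}\le\nu^p_\omega$ explicitly justifies the absolute continuity and finiteness that the paper leaves implicit, since it bounds each divergence by $-\log q_p(\omega,s)\le\log|S|$.
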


\begin{proof}
By Lemma~\ref{lem:boundary-conditional-entropy-zero} and the bound
\[
  0\le H_{P_p^{\omega,\zeta}}(X_n)\le n\log |S|,
\]
dominated convergence gives
\[
  \frac1n H_n^{\partial\Gamma}(p)\to0.
\]
Dividing \eqref{eq:boundary-entropy-decomposition} by $n$ and letting $n\to\infty$ gives
\[
  h(p)=H_1(p)-H_1^{\partial\Gamma}(p).
\]
With the convention $0\log0=0$, the identity $\int_{\partial\Gamma}q_p^\zeta(\omega,s)\dd\nu_\omega^p(\zeta)=q_p(\omega,s)$ gives
\begin{align*}
  H_1(p)-H_1^{\partial\Gamma}(p) &= \int_\Om\int_{\partial\Gamma} \sum_{s:\,q_p(\omega,s)>0} q_p^\zeta(\omega,s) \log\frac{q_p^\zeta(\omega,s)}{q_p(\omega,s)} \dd\nu_\omega^p(\zeta)\dd\whmu_p(\omega)\\
  &= \int_\Om \sum_{s:\,q_p(\omega,s)>0} q_p(\omega,s) D_{\mathrm{KL}}\bigl(\nu_{\omega,s}^p\,\|\,\nu_\omega^p\bigr) \dd\whmu_p(\omega),
\end{align*}
where the second equality follows from \eqref{eq:boundary-bayes}. Since $\nu_{\omega,s}^p=s\nu_{s^{-1}\omega}^p$, this completes the proof.
\end{proof}

\begin{remark}
\label{rem:poisson-boundary}
By the entropy criterion for random walks along measured equivalence relations \cite[Theorem~2.17]{KaimanovichSobieczky2012}, for every $p>p_c$ and for $\mu_p^\infty$-almost every $\omega$, the Gromov boundary equipped with $\nu_\omega^p$ realizes the Poisson boundary of simple random walk on $C_p(\omega)$.
\end{remark}

\subsection{Lower semicontinuity}
Following Silva~\cite[Section~8]{Silva2026}, we use lower semicontinuity of Kullback--Leibler divergence. We apply it to joint measures on $\Om\times\partial\Gamma$.

\begin{lemma}
\label{lem:relative-entropy-lsc}
Let $Y$ be a compact metrizable space. Suppose that $\rho_n,\sigma_n$ are finite Borel measures on $Y$, $\rho_n\to\rho$ and $\sigma_n\to\sigma$ weakly, and $\rho_n(Y)=\sigma_n(Y)$ for every $n$. Then
\[
  \liminf_{n\to\infty} D_{\mathrm{KL}}(\rho_n\,\|\,\sigma_n) \ge D_{\mathrm{KL}}(\rho\,\|\,\sigma).
\]
\end{lemma}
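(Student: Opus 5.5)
The plan is to use the Donsker--Varadhan variational formula. For finite measures $\rho,\sigma$ of equal mass $m>0$ on a compact metrizable space $Y$, it reads
\[
  D_{\mathrm{KL}}(\rho\,\|\,\sigma)=\sup_{\phi\in C(Y)}\Bigl(\int\phi\dd\rho-m\log\frac1m\int e^{\phi}\dd\sigma\Bigr).
\]
If $m=0$, both measures vanish and the divergence is $0$, so that case is trivial. In the general case I first reduce to probability measures by normalization. Since $D_{\mathrm{KL}}(c\rho\,\|\,c\sigma)=c\,D_{\mathrm{KL}}(\rho\,\|\,\sigma)$ for $c>0$, the formula above follows from the probability version, and it is this version I will establish.

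The main step is to verify this formula on compact metrizable $Y$, allowing the value $+\infty$. The inequality ``$\ge$'' is Gibbs' inequality. When $\rho\ll\sigma$, apply Jensen's inequality to $\int e^{\phi}\dd\sigma\ge\int_{\{\dd\rho/\dd\sigma>0\}}e^{\phi-\log(\dd\rho/\dd\sigma)}\dd\rho$. When $\rho\not\ll\sigma$, the left side is $+\infty$ and there is nothing to prove.

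The inequality ``$\le$'' requires two approximation arguments.
\begin{itemize}
\item If $\rho\not\ll\sigma$, choose a Borel set $A$ with $\sigma(A)=0<\rho(A)$. By regularity, pick compact $K\subset A$ with $\rho(K)>0$ and open $U\supset K$ with $\sigma(U)$ small. Taking $\phi=t\psi$ with $\psi$ a Urysohn function for $K\subset U$ makes the functional at least $t\rho(K)-\log(1+e^{t}\sigma(U))$. Letting $\sigma(U)\to0$ first and then $t\to\infty$ shows the functional is unbounded.
\item If $\rho\ll\sigma$, take $\phi=\log(\dd\rho/\dd\sigma)$, truncated to $\log(\max(\cdot,1/k))$ and then bounded above at level $k$, which gives $\int e^{\phi}\dd\sigma\le 1+1/k$ and makes $\int\phi\dd\rho$ converge to the divergence by monotone convergence. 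Then approximate these bounded Borel $\phi$ by continuous functions in $L^1(\rho+\sigma)$ via Lusin's theorem, keeping uniform bounds so that $\int e^{\phi}\dd\sigma$ also converges.
\end{itemize}
This approximation is the main obstacle, and only routine care is needed there.

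With the formula in hand, lower semicontinuity follows because $D_{\mathrm{KL}}$ is a supremum of weakly continuous functionals. Fix $\phi\in C(Y)$. By weak convergence, $\int\phi\dd\rho_n\to\int\phi\dd\rho$, and $\int e^\phi\dd\sigma_n\to\int e^\phi\dd\sigma$. The masses satisfy $m_n=\rho_n(Y)=\sigma_n(Y)\to m$, and $\rho(Y)=\sigma(Y)=m$. If $m>0$, the functional evaluated at $(\rho_n,\sigma_n)$ converges to its value at $(\rho,\sigma)$, hence
\[
  \liminf_{n\to\infty}D_{\mathrm{KL}}(\rho_n\,\|\,\sigma_n)\ge\int\phi\dd\rho-m\log\frac1m\int e^\phi\dd\sigma.
\]
Taking the supremum over $\phi$ gives the claim. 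If $m=0$, the right side of the statement is $0$, and the left side is nonnegative because each $D_{\mathrm{KL}}(\rho_n\,\|\,\sigma_n)\ge0$ for equal masses (Gibbs' inequality). This completes the argument.
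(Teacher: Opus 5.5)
Your proof is correct and follows the same route as the paper: write $D_{\mathrm{KL}}$ as a supremum over $C(Y)$ of functionals that are continuous under weak convergence, then take the supremum to get lower semicontinuity. The difference is minor. The paper cites the form $\sup_{f\in C(Y)}\{\int f\dd\rho-\int(e^f-1)\dd\sigma\}$ from Dembo--Zeitouni, which is affine in $(\rho,\sigma)$ and so needs neither normalization nor your separate $m=0$ case. You instead prove the Donsker--Varadhan logarithmic version from scratch.
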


\begin{proof}
For finite measures of equal mass, the variational formula \cite[Section~6.2]{DemboZeitouni1998} gives
\[
  D_{\mathrm{KL}}(\rho\,\|\,\sigma) = \sup_{f\in C(Y)} \left\{ \int f\dd\rho - \int(e^f-1)\dd\sigma \right\}.
\]
Each expression inside the supremum is continuous under weak convergence, so the supremum is lower semicontinuous.
\end{proof}

For $s\in S$, we define finite measures on $\Om\times\partial\Gamma$ by
\begin{align*}
  M_{p,s}(\dd\omega,\dd\zeta) &= a_{p,s}(\omega)\mu(\dd\omega) \bigl( s\nu_{s^{-1}\omega}^p \bigr)(\dd\zeta), \\
  N_{p,s}(\dd\omega,\dd\zeta) &= a_{p,s}(\omega)\mu(\dd\omega) \nu_\omega^p(\dd\zeta).
\end{align*}
They have the same first marginal $a_{p,s}\mu$, and hence the same total mass.

\begin{proposition}[Lower semicontinuity]
\label{prop:entropy-lsc}
If $p_n\to p>p_c$, then
\[
  \liminf_{n\to\infty}h(p_n)\ge h(p).
\]
\end{proposition}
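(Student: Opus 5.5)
The plan is to rewrite the Kaimanovich--Vershik formula (Theorem~\ref{thm:kv}) as a finite sum of Kullback--Leibler divergences between the joint measures $M_{p,s}$ and $N_{p,s}$ on the compact space $\Om\times\partial\Gamma$. Then we pass to the limit using Corollary~\ref{cor:harmonic-family-continuity} and Lemma~\ref{lem:relative-entropy-lsc}.

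\emph{Step 1: rewriting the formula.} Since $\whmu_p(\dd\omega)q_p(\omega,s)=Z_p^{-1}a_{p,s}(\omega)\mu(\dd\omega)$, and $M_{p,s},N_{p,s}$ share the first marginal $a_{p,s}\mu$, the chain rule for relative entropy (disintegration over the common first marginal) gives
\[
  D_{\mathrm{KL}}(M_{p,s}\,\|\,N_{p,s})=\int_\Om a_{p,s}(\omega)\,D_{\mathrm{KL}}\bigl(s\nu^p_{s^{-1}\omega}\,\|\,\nu^p_\omega\bigr)\dd\mu(\omega).
\]
Hence Theorem~\ref{thm:kv} becomes
\[
  h(p)=\frac1{Z_p}\sum_{s\in S}D_{\mathrm{KL}}(M_{p,s}\,\|\,N_{p,s}).
\]
Here we use the convention that both measures vanish where $a_{p,s}=0$, so those $\omega$ contribute nothing. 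This identity holds at every parameter above $p_c$, in particular at each $p_n$ for $n$ large.

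\emph{Step 2: weak convergence of the joint measures.} We claim $M_{p_n,s}\to M_{p,s}$ and $N_{p_n,s}\to N_{p,s}$ weakly for each $s\in S$.

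For $N$, this is Lemma~\ref{lem:weighted-weak-convergence} applied with $\eta_n=\bnu_{p_n}\to\bnu_p$ (Corollary~\ref{cor:harmonic-family-continuity}) and $b_n=a_{p_n,s}\to a_{p,s}$ in $L^1(\mu)$ (Lemma~\ref{lem:environment-continuity}).

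For $M$, test against $F\in C(\Om\times\partial\Gamma)$ and substitute $\omega=s\omega'$ using $\G$-invariance of $\mu$:
\[
  \int F\dd M_{p_n,s}=\int a_{p_n,s}(s\omega')F(s\omega',s\zeta)\dd\bnu_{p_n}(\omega',\zeta).
\]
The map $(\omega',\zeta)\mapsto F(s\omega',s\zeta)$ is continuous, and $a_{p_n,s}(s\,\cdot)\to a_{p,s}(s\,\cdot)$ in $L^1(\mu)$. Lemma~\ref{lem:weighted-weak-convergence} therefore applies again, exactly as in the proof of Lemma~\ref{lem:stationary-stability}.

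\emph{Step 3: lower semicontinuity.} Lemma~\ref{lem:relative-entropy-lsc} applies on $Y=\Om\times\partial\Gamma$, since the measures have equal masses $\int a_{p_n,s}\dd\mu$. It gives
\[
  \liminf_n D_{\mathrm{KL}}(M_{p_n,s}\,\|\,N_{p_n,s})\ge D_{\mathrm{KL}}(M_{p,s}\,\|\,N_{p,s}).
\]
Summing over the finitely many $s$ (the terms are nonnegative, so the $\liminf$ of the sum dominates the sum of the $\liminf$s) and using $Z_{p_n}\to Z_p>0$ yields $\liminf h(p_n)\ge h(p)$.

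The main obstacle is Step 1. We must justify the disintegration identity for KL divergence of measures that share a marginal, including the measurability of $\omega\mapsto D_{\mathrm{KL}}(s\nu^p_{s^{-1}\omega}\|\nu^p_\omega)$ and the infinite case, where $M\not\ll N$ exactly when the fibrewise absolute continuity fails on a set of positive measure. I would do this via the variational formula: take a countable dense family of $f\in C(\Om\times\partial\Gamma)$, or alternatively verify the Radon--Nikodym density fibrewise as $\frac{\dd M}{\dd N}(\omega,\zeta)=\frac{\dd(s\nu_{s^{-1}\omega})}{\dd\nu_\omega}(\zeta)$, which is jointly measurable by the Bayes formula \eqref{eq:boundary-bayes}. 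The rest is routine bookkeeping with the lemmas already established.
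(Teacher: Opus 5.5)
Your proposal is correct and follows the paper's proof essentially step for step. The paper also rewrites the Kaimanovich--Vershik formula as $h(p)=Z_p^{-1}\sum_{s\in S}D_{\mathrm{KL}}(M_{p,s}\,\|\,N_{p,s})$, gets $N_{p_n,s}\to N_{p,s}$ and $M_{p_n,s}\to M_{p,s}$ from Corollary~\ref{cor:harmonic-family-continuity} and Lemma~\ref{lem:weighted-weak-convergence}, and concludes with Lemma~\ref{lem:relative-entropy-lsc}. The only difference is cosmetic: for $M_{p_n,s}$ the paper pushes $\bnu_{p_n}$ forward by $\Psi_s(\omega,\zeta)=(s\omega,s\zeta)$ before weighting by $a_{p_n,s}$, whereas you weight by $a_{p_n,s}(s\,\cdot)$ and test against $F\circ\Psi_s$, which is the same computation.
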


\begin{proof}
Since $M_{p,s}$ and $N_{p,s}$ have the same first marginal, the chain rule gives
\[
  D_{\mathrm{KL}}\bigl(M_{p,s}\,\|\,N_{p,s}\bigr) = \int_\Om a_{p,s}(\omega) D_{\mathrm{KL}}\!\left( s\nu_{s^{-1}\omega}^p\,\middle\|\,\nu_\omega^p \right)\dd\mu(\omega).
\]
Together with $\whmu_p(\dd\omega)q_p(\omega,s)=Z_p^{-1}a_{p,s}(\omega)\mu(\dd\omega)$ and \eqref{eq:kv}, this yields
\begin{equation}\label{eq:finite-measure-kv}
  h(p) = \frac1{Z_p} \sum_{s\in S} D_{\mathrm{KL}} \bigl( M_{p,s}\,\|\,N_{p,s} \bigr).
\end{equation}

By Corollary~\ref{cor:harmonic-family-continuity}, we have
\[
  \bnu_{p_n}\to\bnu_p
\]
weakly.
Since $a_{p_n,s}\to a_{p,s}$ in $L^1(\mu)$, Lemma~\ref{lem:weighted-weak-convergence} gives
\[
  N_{p_n,s}\to N_{p,s}.
\]
For $M_{p_n,s}$, we use the homeomorphism $\Psi_s(\omega,\zeta)=(s\omega,s\zeta)$ of $\Om\times\partial\Gamma$. The measure $(\Psi_s)_*\bnu_p$ has first marginal $\mu$ and conditional measures $s\nu_{s^{-1}\omega}^p$. Hence
\[
  (\Psi_s)_*\bnu_{p_n} \to (\Psi_s)_*\bnu_p,
\]
and another application of Lemma~\ref{lem:weighted-weak-convergence} yields
\[
  M_{p_n,s}\to M_{p,s}.
\]

Applying Lemma~\ref{lem:relative-entropy-lsc} to \eqref{eq:finite-measure-kv} and using the finiteness of $S$ and $Z_{p_n}\to Z_p$, we obtain
\[
  \liminf_{n\to\infty}h(p_n) \ge \frac1{Z_p} \sum_{s\in S} D_{\mathrm{KL}} \bigl( M_{p,s}\,\|\,N_{p,s} \bigr) = h(p).\qedhere
\]
\end{proof}

\begin{proof}[Proof of Theorem~\ref{thm:main}]
Continuity of the drift $\ell$ follows from Theorem~\ref{thm:drift-continuity}. Continuity of the entropy $h$ follows from Propositions~\ref{prop:entropy-usc} and~\ref{prop:entropy-lsc}.
\end{proof}

\begin{proof}[Proof of Corollary~\ref{cor:dimension-intro}]
By \cite[Theorem~1.1]{Sakamoto2024}, the dimension is given by \eqref{eq:dimension-formula-intro}. Since $\ell(p)>0$ for $p>p_c$, the claim follows from Theorem~\ref{thm:main}.
\end{proof}

\bibliographystyle{amsalpha}
\bibliography{continuity_references}
\enlargethispage{\baselineskip}

\end{document}